\newcommand{\subs}[1]{\mathcal{#1}}
\newcommand{\mset}{\mathbf{M}}
\newcommand{\graph}{\mathbf{G}}
\newcommand{\reels}{\mathbb{R}} 
\newcommand{\tran}{\top\!}
\newcommand{\jsr}{\hat \rho}
\newcommand{\card}[1]{|{#1}|}
\newcommand{\spans}{\text{span}}
\newcommand{\kron}{\otimes}
\renewcommand{\ker}[1]{\text{Ker}\left (#1 \right)}
\newcommand{\im}[1]{\text{Im}(#1)}
\newcommand{\bigO}{\mathcal{O}}
\newtheorem{theorem}{Theorem}[section]
\newtheorem{definition}{Definition}
\newtheorem{remark}{Remark}
\newtheorem{lemma}[theorem]{Lemma}
\newtheorem{proposition}[theorem]{Proposition}
\newtheorem{example}{Example}
\begin{document}

\begin{frontmatter}

\title{Deciding the boundedness and dead-beat stability of\\ constrained switching systems.}
\author{Matthew Philippe \fnref{uclthx}}
\ead{matthew.philippe@uclouvain.be}
\author{Gilles Millerioux \fnref{nancythx}}
\ead{gilles.millerioux@univ-lorraine.fr}
\author{Rapha\"el M. Jungers \fnref{uclthx}}
\ead{raphael.jungers@uclouvain.be}
\fntext[uclthx]{M. Philippe (corresponding author) and R. M. Jungers are with the ICTEAM Institute, Universit\'e Catholique de Louvain.
M. Philippe is a F.N.R.S./F.R.I.A. fellow; R. Jungers  is a F.R.S./F.N.R.S. research associate.
They are supported by the Belgian Interuniversity Attraction Poles, and by the ARC grant 13/18-054 (Communaut\'{e} fran\c{c}aise de Belgique). }
\fntext[nancythx]{G. Millerioux is with the 
Research Center for Automatic Control of Nancy (CRAN),
Universit\'e de Lorraine. This work was partially supported by Research Grants ANR-13-INSE-0005-01 from the Agence Nationale de la Recherche in France.
}
\begin{abstract} 
We study computational questions related with the stability of discrete-time linear switching systems with switching sequences constrained by an automaton.\\
We first present a decidable sufficient condition for their boundedness when the maximal exponential growth rate equals one.
The condition generalizes the notion of the irreducibility of a matrix set, which is a well known sufficient condition for
 boundedness in the arbitrary switching (i.e. unconstrained) case.\\
Second, we provide a polynomial time algorithm for deciding the \emph{dead-beat} stability of a system,
 i.e. that all trajectories vanish to the origin in finite time.
  The algorithm generalizes one proposed by Gurvits for arbitrary switching systems, and is illustrated with a real-world case study.
\end{abstract}

\end{frontmatter}

\section{Introduction}
\label{sec:intro_ref}

Switching systems are dynamical systems for which the state 
dynamics themselves vary between different operating modes according to a \textit{switching sequence}. 
The systems under study in this paper are \textit{discrete-time linear switching systems}. 
Given a set of $N$ matrices $\mset = \{A_1, \ldots, A_N \} \subset \reels^{n \times n}$, 
the dynamics of a discrete-time linear switching system are given by $$ x_{t+1} = A_{\sigma(t)} x_t,$$
where $x_0 \in \reels^n$ is a given initial condition. 
The mode of the system at time $t$ is  $\sigma(t) \in \{1, \ldots, N\}$.
 The \emph{switching sequence}  driving the system is the sequence of modes $
\sigma(0), \sigma(1), \ldots$ in time. 
Such systems are found in many practical and theoretical domains. For example they appear in the study of networked control systems \cite{AlRaCMAA, JuDIFSOD}, in  
congestion control for computer networks \cite{ShWiAPSM},  
in viral mitigation \cite{HeMiOAMS}, 
as abstractions of more complex hybrid systems  \cite{LiMoBPIS}, and other fields (see e.g. \cite{JuTJSR, LiAnSASO, ShWiSCFS}  and references therein). 

 A large research effort has been devoted to the study of the  stability and stabilization  of switching systems 
 (see e.g.~\cite{LeDuUSOD, EsLeCOLS, LiAnSASO, ShWiSCFS, KuSSSF, DaBePDLF}). 
  The question of deciding the stability of a switching system is challenging and is known to be hard in general (see \cite{JuTJSR}, Section 2.2, for hardness results).\\
 In this paper, we first develop a sufficient condition for the boundedness of switching systems, that is,
  the existence of a uniform bound $K \geq 1$ such that for all switching sequences and all time $t \geq 0$,
\begin{equation}
   \|A_{\sigma(t)} \cdots A_{\sigma(0)}\| \leq K.
   \label{eq:boundedness}
\end{equation}  
 Second,  we provide an algorithm for deciding when a system is \emph{dead-beat} stable. 
 This corresponds to the fact that there exists a time $T \geq 1$ such that, for all switching sequences, and all $t \geq T$, $A_{\sigma(t)} \cdots A_{\sigma(0) }= 0$.
  Both problems have been studied for \emph{arbitrary switching systems} (\cite{BeWaBSOM, JuTJSR, BlTsTBOA, GuSOLIP2}), 
 for which the mode $\sigma(t)$ can take any value in $\{1, \ldots, N\}$ at any time. 
 To the best of our knowledge, these studies have yet to be extended to switching systems with more general switching sequences, 
 such as the ones studied in \cite{AhJuPaRoJSRP, EsLeCOLS,HeMiOAMS,KoTBWF, KuSSSF, LeDuUSOD}.

In this work, we allow for the definition of constraints on the switching sequences. These rules on the switching sequences are expressed through an \textit{automaton}.
 Automata are  common tools for the representation of admissible sequences of symbols (see \cite{LoACOW}, Section 1.3 for an introduction).
An automaton is here represented as a \emph{strongly connected} graph $\graph(V,E)$, with a set  of nodes $V$ and edges $E$. 
The edges of this graph are both \textit{directed} and \textit{labeled}. 
 An edge takes the form $(v, w, \sigma) \in E$, where $v$ and $w$ are respectively the \emph{source} and \emph{destination} nodes of the edge, 
and $\sigma \in \{1, \ldots, N\}$ is the \emph{label}, taking its values in the set of modes of the switching system. \\
The edges $E$ represent the possible time transitions of a switching system, and the nodes $V$ need not be inherently associated with modes of the system.
A switching sequence $\sigma(1), \sigma(2), \ldots,$ of the system is then said to be  \textit{accepted} by $\graph$ if there exists a path in $\graph$ 
such that the sequence of labels \emph{along the edges} of the path equals the switching sequence itself.
 Examples of such automatons, with their corresponding switching rules, are presented in Figure \ref{fig:example}.
 Note that, in general, there can be several automata that represent a same set of switching rules.  
 Because it is not relevant to our purpose, we do not specify an initial or final node for the paths - in this we differ  from the classical definition of an automaton
 (\cite{LoACOW}, Section 1.3). 
 \begin{figure}[ht]
\centering
\begin{subfigure}[t]{0.25\textwidth}
\centering
\includegraphics[scale= 0.4]{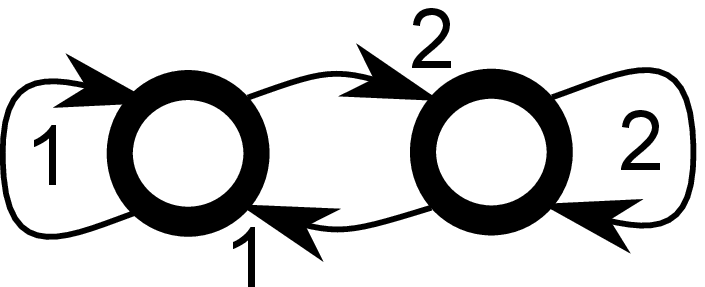}
\caption{}
\label{subfig:bigarb}
\end{subfigure}~
\begin{subfigure}[t]{0.25\textwidth}
\centering
\includegraphics[scale= 0.4]{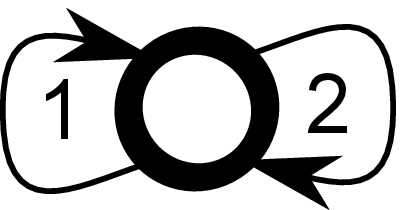}
\caption{}
\label{subfig:smallarb}
\end{subfigure}~
\begin{subfigure}[t]{0.25\textwidth}
\centering
\includegraphics[scale= 0.4]{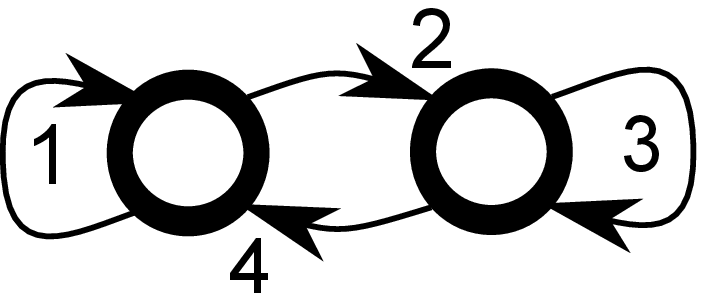}
\caption{}
\label{subfig:last}
\end{subfigure}

\caption{Both automata on Figure \ref{subfig:bigarb} and \ref{subfig:smallarb} accept \emph{arbitrary switching sequences} on two modes.
The third automaton (Figure \ref{subfig:last}) is on 4 modes. It does not generate arbitrary switching sequences, mode 1 must be followed by mode 1 or 2, mode 2 by mode 3 or 4, etc...}
\label{fig:example}
\end{figure}

Given a graph $\graph(V,E)$ on $N$ labels and a set of $N$ matrices $\mset$, we define the \emph{constrained switching system} $S(\graph, \mset)$
as the following discrete-time linear switching system:
\begin{equation}
\begin{aligned}
& x_{t+1} = A_{\sigma(t)} x_t, \, \\   & \qquad  \sigma(0), \ldots, \sigma(t) \text{ is accepted by $\graph$}.
\end{aligned}
\label{eq:dynsys}
\end{equation}
\emph{Arbitrary switching systems} are special cases of constrained switching systems.
 Their switching rule can be represented by the automata alike those of Figures \ref{subfig:bigarb} and \ref{subfig:smallarb}, 
 or by any \emph{path-complete graph} (see \cite{AhJuPaRoJSRP}).

The boundedness and stability properties of a system $S(\graph, \mset)$ are tightly linked to its \emph{constrained joint spectral radius}.
This concept was introduced by X. Dai \cite{DaAGTS} in 2012 for the stability analysis of constrained switching systems.
The CJSR of the system $S(\graph, \mset) $ is defined as follows:
\begin{equation}
 \begin{aligned}
 & \jsr(S) =
 \lim_{t \rightarrow \infty} \max \Big \{ \left \| A_{\sigma(t-1)}^{} \cdots A_{\sigma(0)}^{}
 \right \|^{1/t} : \, \\ 
  & \qquad \qquad
 \left .  \sigma(0), \ldots, \sigma(t-1)  \text{ is accepted by $\graph$} \right . \Big \}.
\end{aligned}
 \label{eq:CJSR}
\end{equation}  
 When $\graph$ allows for \emph{arbitrary switching}, the CJSR is equal to the 
 \emph{joint spectral radius} (JSR) of the set $\mset$, 
 which was introduced by Rota and Strang in 1960 (see \cite{JuTJSR} for a monograph on the topic). \\
The constrained joint spectral radius is the maximal exponential growth rate of a system.
 Its value reflects the stability properties of a system $S(\graph, \mset)$. 
If $ \jsr(S) < 1$, the system is both asymptotically and exponentially stable (see \cite{DaAGTS} - Corollary 2.8).
If $\jsr(S) > 1$, the system possesses an unbounded trajectory whose growth rate is exponential.
The last case $\jsr(S) = 1$ is more complicated.
The system is not asymptotically stable, but may be bounded or not depending on its parameters. \\
If $S$ is an arbitrary switching system with $\jsr(S) = 1$, then there exists a condition guaranteeing its boundedness.
This condition is the \emph{irreducibility} of $\mset$. 
\begin{definition}
A set $\mset \subset \reels^{n \times n}$ of matrices is \emph{irreducible} if
 for any non-trivial linear subspace $\subs{X} \subset \reels^n$, 
 i.e. with $0 < \dim(\subs{X}) < n,$
  there is a matrix $A \in \mset$ such that 
$ A\subs{X}\not \subset \subs{X}.$
That is, the matrices in $\mset$ do not share a common non-trivial invariant subspace of $\reels^n$.
\label{def:red}
\end{definition}
\begin{proposition}[e.g., \cite{JuTJSR}, Theorem 2.1]
If a set of matrices $\mset$ with joint spectral radius equal to 1 is irreducible,  then there is a uniform bound on the norm of all products of matrices taken from $\mset$.
\label{prop:arbbound}
\end{proposition}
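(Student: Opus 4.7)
The plan is to exhibit an extremal norm on $\reels^n$ under which every matrix in $\mset$ is non-expansive, and then conclude by the equivalence of norms in finite dimension. Concretely, let $\subs{S}$ denote the multiplicative semigroup generated by $\mset \cup \{I\}$, and define the candidate function
\begin{equation*}
|||x||| := \sup_{P \in \subs{S}} \|Px\| \in [0,\infty].
\end{equation*}
I would first verify the cheap properties: $|||\cdot|||$ is positively homogeneous and subadditive wherever it is finite, and it satisfies $|||x||| \geq \|x\|$ (taking $P = I$) and $|||Ax||| \leq |||x|||$ for every $A \in \mset$ (because $PA \in \subs{S}$ whenever $P \in \subs{S}$).

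Next I would show that the set $V := \{x \in \reels^n : |||x|||<\infty\}$ is a linear subspace of $\reels^n$ and, by the non-expansion inequality above, invariant under every $A \in \mset$. Invoking the irreducibility of $\mset$ (Definition \ref{def:red}), $V$ must be either $\{0\}$ or all of $\reels^n$.

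The decisive step is to rule out $V = \{0\}$, which is the only place where the assumption $\jsr(\mset) = 1$ enters in an essential way. I would argue by contradiction: if $V=\{0\}$ then the norms of the products in $\subs{S}$ are unbounded. Picking a sequence $P_k \in \subs{S}$ with $\|P_k\| \to \infty$, normalizing to $Q_k := P_k/\|P_k\|$, and passing to a subsequence by compactness of the unit sphere of $\reels^{n\times n}$, I obtain a limit $Q$ with $\|Q\|=1$. Because $\jsr(\mset)=1$, one cannot have arbitrary blow-up of products, and a careful analysis of the kernel (or image) of $Q$ produces a proper non-trivial subspace of $\reels^n$ left invariant by every $A \in \mset$, contradicting irreducibility. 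This argument --- essentially a compactness/limit-of-products argument refining Barabanov's construction of an extremal norm --- is the main obstacle and the only non-trivial ingredient.

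Once $V = \reels^n$ is established, $|||\cdot|||$ is a bona fide norm on $\reels^n$ (finite, positively homogeneous, subadditive, and positive-definite thanks to $|||x||| \geq \|x\|$), and by construction it satisfies $|||Px||| \leq |||x|||$ for every product $P$ of matrices in $\mset$. Using the equivalence of norms in finite dimension, there exists $K \geq 1$ such that $\|\cdot\| \leq K |||\cdot|||$ and $|||\cdot||| \leq K \|\cdot\|$, yielding $\|Px\| \leq K^2 \|x\|$ uniformly over all $x \in \reels^n$ and all products $P$ of matrices from $\mset$, which is precisely the uniform bound claimed in the proposition.
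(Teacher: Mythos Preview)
Your setup is exactly the one underlying the paper's approach. The paper does not prove Proposition~\ref{prop:arbbound} directly (it is cited from \cite{JuTJSR}), but its proof of the generalization, Theorem~\ref{thm:non_def2}, specializes to a proof of the proposition when $\graph$ has a single node. In that specialization, your subspace $V = \{x : \sup_{P}\|Px\| < \infty\}$ is precisely the set $\subs{K}_v$ of Lemma~\ref{lemma:ubndqualify}, and your use of irreducibility to force $V \in \{\{0\},\reels^n\}$ matches that lemma verbatim. The divergence is entirely in the step ruling out $V=\{0\}$, which you correctly flag as the crux.

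The specific mechanism you sketch there does not go through as stated. From $\jsr(\mset)=1$ and $\|P_k\|\to\infty$ one can extract a limit $Q$ of $P_k/\|P_k\|$, and since $\rho(P_k)\le \jsr(\mset)^{|P_k|}=1$ one even gets $\rho(Q)=0$, so $Q$ is nilpotent and $\ker Q$ is non-trivial. But neither $\ker Q$ nor $\im{Q}$ is, in general, invariant under the matrices of $\mset$: the limit $Q$ is not a member of the semigroup, and nothing forces $A\ker Q\subseteq\ker Q$ for $A\in\mset$. Passing to intersections or spans over \emph{all} such limits does yield $\mset$-invariant subspaces, but those may well be $\{0\}$ or $\reels^n$. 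Your phrase ``one cannot have arbitrary blow-up of products'' is also dangerously close to assuming the conclusion: $\jsr(\mset)=1$ is perfectly compatible with $\|P_k\|\to\infty$ in the absence of irreducibility. So ``a careful analysis of the kernel (or image) of $Q$'' hides a genuine missing idea.

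The paper closes this gap differently. Assuming the system is unbounded (hence $V=\{0\}$), a compactness argument on the unit sphere of $\reels^n$ --- not of $\reels^{n\times n}$ --- is used in Lemma~\ref{lemma:ubndquantify} to produce a uniform horizon $T$ such that every $x$ admits a product of length at most $T$ with $\|Px\|\ge 2\|x\|$. Chaining these products gives $\jsr(\mset)\ge 2^{1/T}>1$, contradicting $\jsr(\mset)=1$. This contrapositive (unbounded $\Rightarrow$ $\jsr>1$) is the standard completion, and it is what your extremal-norm construction needs in place of the vague kernel/image step to become a complete proof.
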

The irreducibility property is known to be decidable (see \cite{ArPeTCIS} and references therein).
Moreover, irreducibility implies the existence of \emph{Barabanov} norms for arbitrary switching systems. 
The existence of such norms is very useful for the stability analysis of arbitrary switching systems \cite{WiOTSO, JuTJSR, GuPrECOJ}.\\
 The first part of this paper is focused around providing a proper generalization of both irreducibility and Proposition \ref{prop:arbbound} for
  constrained switching systems\footnote{Preliminary results on this question were presented in the conference paper \cite{PhJuASCF}.}.
As shown in Example \ref{examp:irrednotnondef}, Proposition \ref{prop:arbbound} does not generalize directly to constrained switching systems.

\begin{example}
\label{examp:irrednotnondef}
We construct an \emph{unbounded} constrained switching system $S(\graph, \mset)$, 
which has both $\jsr(S) = 1$ and an irreducible set of matrices.
The set $\mset = \{A_1, A_2\}$ is defined as
 \begin{equation}
 \begin{aligned}
 & A_1 = 
 \begin{pmatrix}
 1 & 1 \\
 0 & 1
 \end{pmatrix},
 & A_2 = 
 \begin{pmatrix}
 0 & 1 \\
 1 & 0
 \end{pmatrix}.
 \end{aligned}
 \end{equation}
 The automaton $\graph$ is given in Figure \ref{fig:auto_ex1}.
  It refuses any switching sequence containing $\ldots 121 \ldots$ as a subsequence.
 \begin{figure}[!ht]
 \centering
 \includegraphics[scale=0.4]{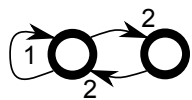}
 \caption{No paths in this graph can carry a sequence containing $\ldots 121 \ldots$.}
 \label{fig:auto_ex1}
 \end{figure}
Since $A_1$ and $A_2$  do not share common eigenvectors, the set $\mset$ is irreducible.\\
Moreover, we can verify that $\jsr(S) = 1$. Indeed, for all $k \geq 1$, 
the sequence $\sigma(1), \ldots, \sigma(k)$ maximizing the norm 
$\|A_{\sigma(k)} \cdots A_{\sigma(1)}\|$ in (\ref{eq:CJSR}) is simply $\sigma(t) = 1$ for all $t = 1, \ldots, k$. 
Equation (\ref{eq:CJSR}) thus boils down to the classical Gel'fand formula for computing the spectral radius 
of $A_1$, which is equal to \emph{1}.\\
 However, for the same switching sequence, we have that $\|A_1^t\| \rightarrow \infty$ as $t \rightarrow \infty$, and so $S$ is unbounded.\\
We conclude that proposition \ref{prop:arbbound} does not generalize as it is to constrained switching systems.
\end{example}

In the second part of the paper, we put ourselves in the case $\jsr(S) = 0$. From the results of Dai, we known that this implies the asymptotic stability of any constrained switching system. 
However, in the arbitrary switching case, this is also equivalent to \emph{dead-beat} stability (Theorem \ref{thm:0JSR}) and we can decide $\jsr(S) = 0$ in polynomial time. 
\begin{theorem}[e.g.~\cite{JuTJSR}, Section 2.3.1]
The joint spectral radius of a set of $N$ matrices $\mset$ 
is zero if and only if all
 products of size $n$ of matrices in $\mset$ are equal to zero.
\label{thm:0JSR}
\end{theorem}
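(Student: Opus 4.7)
The plan is to split the biconditional. The forward direction ($\Leftarrow$) is immediate: if every length-$n$ product vanishes, then every product of length $t\geq n$ also vanishes since it factors through one, so $\max_\sigma\|A_{\sigma(t-1)}\cdots A_{\sigma(0)}\|^{1/t}=0$ for all $t\geq n$ and the defining limit for $\jsr(\mset)$ equals $0$.

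For the nontrivial direction ($\Rightarrow$), I would first extract nilpotency: assuming $\jsr(\mset)=0$, any finite product $\Pi=A_{i_k}\cdots A_{i_1}$ satisfies $\rho(\Pi)=\lim_m\|\Pi^m\|^{1/m}\leq\jsr(\mset)^k=0$, because iterating $\Pi$ produces products of length $km$. Hence every finite product in the semigroup generated by $\mset$ is nilpotent. I then proceed by induction on the dimension $n$. The workhorse is the decreasing sequence of joint-image subspaces
\begin{equation*}
V_k \;=\; \sum_\sigma \im{A_{\sigma(k)}\cdots A_{\sigma(1)}}, \qquad V_0=\reels^n,\quad V_{k+1}=\sum_i A_iV_k\subseteq V_k.
\end{equation*}
Either the dimensions strictly decrease, so $V_n=\{0\}$ and all length-$n$ products vanish outright, or at some step $k<n$ the sequence stabilises, $V_{k+1}=V_k$, which makes $V_k$ a jointly invariant subspace satisfying $\sum_i A_iV_k=V_k$ with $d:=\dim V_k<n$.

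In the stabilisation case I restrict the family to $V_k$; the restricted family $\{A_i|_{V_k}\}$ still has JSR zero (any norm on $V_k$ extends to $\reels^n$ so that the restrictions are non-expanding relative to the original), and acts on a space of dimension $d<n$. The induction hypothesis forces every length-$d$ product of the restrictions to be zero, which gives $V_{k+d}=\{0\}$; but stabilisation yields $V_{k+d}=V_k$, so $V_k=\{0\}$ and again all length-$n$ products vanish. The base case $n=1$ is immediate since $\jsr(\{a_1,\ldots,a_N\})=\max_i|a_i|$.

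The main obstacle is the stabilisation step: one must set up the induction so that restricting to the invariant subspace $V_k$ really does preserve the zero-JSR hypothesis while strictly dropping the dimension, and then propagate the conclusion back to the full ambient space via the identity $V_{k+d}=V_k$. A cleaner, purely algebraic alternative would bypass the induction entirely by invoking Levitzki's theorem — a multiplicative semigroup of nilpotent matrices is simultaneously strictly-upper-triangularisable — after which any product of $n$ strictly upper triangular $n\times n$ matrices is zero by direct inspection.
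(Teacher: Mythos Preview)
The paper does not supply its own proof of this statement: Theorem~\ref{thm:0JSR} is quoted as a known result from \cite{JuTJSR} and then invoked as a black box in the proof of Theorem~\ref{thm:0CJSR}. So there is no in-paper argument to compare against.

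On its own merits, your Levitzki alternative is correct and is the standard route: $\jsr(\mset)=0$ forces every finite product to be nilpotent, Levitzki's theorem then puts all $A_i$ simultaneously in strictly upper-triangular form, and any product of $n$ strictly upper-triangular $n\times n$ matrices vanishes.

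Your primary argument via the chain $V_k=\sum_\sigma \im{A_{\sigma(k)}\cdots A_{\sigma(1)}}$ has a gap, however. You assert that in the stabilisation case $d=\dim V_k<n$, but your dichotomy does not rule out stabilisation at $k=0$, i.e.\ $V_1=\sum_i\im{A_i}=\reels^n$, in which case $d=n$ and the inductive hypothesis is unavailable. Each $A_i$ being nilpotent (hence singular) does not prevent the \emph{sum} of their images from filling $\reels^n$. It is in fact true that $V_1\neq\reels^n$ whenever $\jsr(\mset)=0$, but establishing this amounts to exhibiting a common invariant hyperplane for the $A_i$, which is precisely the first step of Levitzki's proof. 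So your two approaches are not really independent, and the clean fix is to commit to the Levitzki route from the outset.
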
 
To the best of our knowledge, 
no extensions of these results to constrained switching systems have been proposed yet.
 And again, it appears that Theorem \ref{thm:0JSR} does not hold for constrained switching systems.
 \begin{example}
The \emph{scalar} ($n = 1$) system on the two matrices $A_1 = 1$ and $A_2 = 0$ with the cyclic automaton of Figure \ref{fig:cycle} 
sees all of its trajectories vanish to the origin after 2 steps, and so $\jsr(S) = 0$ from (\ref{eq:CJSR}).
 \begin{figure}[!ht]
 \centering
 \includegraphics[scale=0.4]{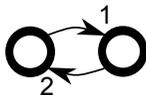}
 \caption{A simple cyclic system on two modes.}
 \label{fig:cycle}
 \end{figure}
We conclude that Theorem \ref{thm:0JSR} does not hold when constraints are added on switching sequences.
\end{example}

The plan of the paper is as follows.\\
Section \ref{sec:bound} presents a decidable sufficient condition for the boundedness of systems $S(\graph, \mset)$ with $\jsr(S) = 1$.
Our main result provides a condition generalizing  the concept of irreducibility to constrained switching systems. 
 Its decidability is proven in Subsection \ref{sub:dec}.  
In Subsection \ref{sec:koz}, we then compare the condition with a second one which, after a one-shot transformation 
of a constrained switching system into a higher-dimensional arbitrary switching system, relies on the classical definition of irreducibility .\\
In Section \ref{sec:0cjsr}
 we generalize Theorem \ref{thm:0JSR} to constrained switching systems. We provide an algorithm that, 
 given a system $S(\graph,  \mset)$, outputs whether $\jsr(S) = 0$ or $\jsr(S) > 0$.
  The complexity of the algorithm is polynomial in the dimensions of $\mset$ and $\graph$. 
  An example of application of this algorithm for designing the left-inverter of a dynamical system is presented in Subsection \ref{ssec_electric}.

\subsection*{Notations}
Given an automaton $\graph(V,E)$ with nodes $V$ and edges $E$, $\card{V}$ and $\card{E}$ denote respectively the amount of nodes and edges of $\graph$.
A path in the graph $\graph$ is a sequence of \emph{consecutive} edges.
The length of a path $p$ is denoted  $|p|$ and is the amount of \emph{edges} it contains. An edge $e = (v_1, v_2, \sigma)$ is  a path of length 1.
We refer to \emph{sets of paths} by borrowing notations of language theory.
 The set of all the paths of length $k$ is written $E^k$, 
 and we let $E^* = \cup_{k = 1}^{\infty} E^k$. 
 Given two nodes $v$ and $w$, we use $E^k_{v,w}$ to denote the set of all the paths of length $k$ from the node $v$ to the node $w$,
  and we let $E_{v,w}^* = \cup_{k = 1}^{\infty} E_{v,w}^k.$
  When the second node is unspecified, $E_{v}^k$ and $E_{v}^*$ relate to paths whose source is $v$, and whose destination can be any  node in $V$. 
We use $p = [p_1, p_2, \ldots]$ to denote a partition of the path $p$ in smaller consecutive paths $p_1$, $p_2$, $\ldots$.
A \emph{cycle} on a node $v$ is a path $c \in E^*_{v,v}$. The cycle $c$ is \emph{simple} on the node $v$ if there are no partitions $c = [c_1, c_2]$ where both $c_1 \in E_{v,v}^*$ and $c_2 \in E_{v,v}^*$.\\
To each path $p$ with length $\card{p} = T$ is associated a matrix product $A_p = A_{\sigma(T)} \cdots A_{\sigma(1)}, $
where $\sigma(i)$, $1 \leq i \leq t$ is the label of the $i$'th edge in $p$.\\
For $x \in \reels^n$ and $A \in \reels^{n \times n}$, $\| x \|$ is the euclidean norm of $x$ and $\|A\|$ the matrix norm of $A$ induced by the euclidean norm.

\section{Generalizing irreducibility to constrained switching systems}
\label{sec:bound}

In this section, we provide a sufficient condition for the boundedness of systems with $\jsr(S) = 1$,  
generalizing the concept of irreducibility and Proposition \ref{prop:arbbound} to constrained switching systems. 
The first part of this section is focused on the presentation our main result Theorem \ref{thm:non_def2}. 
Its proof is detailed in Subsection \ref{subsec:proof}. In Subsection \ref{sub:dec} we prove the decidability of the condition. 
Subsection \ref{sec:koz} then presents an alternative boundedness condition, 
appealing since it allows to ignore combinatorial details about the graph $\graph$ after a one-shot transformation of a system, 
but definitely weaker than Theorem \ref{thm:non_def2}.

Our generalization of irreducibility relies on the following definition.
\begin{definition}[Nodal irreducibility]
 Given a system $S(\graph(V,E), \mset)$,
  a \emph{node} $v \in V$ is \emph{irreducible} if for any \emph{non-trivial} linear subspace $\subs{X} \subset \reels^n$ 
 there is a cycle $c \in E^*_{v,v}$ such that
$ A_c \subs{X} \not \subset \subs{X}. $
\label{def:ni}
\end{definition}

For a system with $\jsr(S) = 1$, it must be the case that there
 exists a sequence of paths in $\graph$, $\{p_k \in E_{v}^k\}_{k = 1,2,\ldots}$, starting at some node $v \in V$ such that,
  for all $k = 1,2,\ldots$, $A_{p_k} \neq 0$. 
The existence of arbitrary long paths is guaranteed by the strong connectivity of the directed graph $\graph$. 
But this is not enough.  The system $S(\graph, \mset)$ must possess a connectivity beyond the one brought in a graph theoretical sense by $\graph$. 
\begin{definition}[Linear Connectivity]
Given a system $S(\graph(V,E), \mset)$ and two nodes $v, w \in V$, 
$v$ is \emph{linearly connected} to $w$  if there exists $p \in E^*_{v,w}$ such that $A_p \neq 0.$\\
The \emph{system} $S$ is \emph{linearly connected} if all pairs of nodes are linearly connected. 
\label{def:lc}
\end{definition}

Our main result in this section can be roughly stated as follows. 
If we can guarantee that all the trajectories of a system with CJSR = 1 encounter 
irreducible nodes regularly, then the system is bounded.
 To formalize the result, we borrow the concept of unavoidability from automata theory (see e.g. \cite{LoACOW}, Proposition~1.6.7.).
\begin{definition}
Given a graph $\graph(V,E)$, a subset $V^*$ of $V$ is said to be unavoidable if any cycle in $\graph$ encounters at least one node in $V^*$.
\end{definition}
\begin{lemma}
Given a graph $\graph(V,E)$, let $V^*$ be an unavoidable set of nodes.
Any path $p$ of length $\card{V}$ encounters at least 1 node in $V^*$.
\label{lemma:unavoidable}
\end{lemma}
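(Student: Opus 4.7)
The plan is a short pigeonhole argument by contradiction, exploiting the fact that any sufficiently long walk in a finite graph must revisit a node and therefore contain a cycle.

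Assume for contradiction that some path $p$ of length $|V|$ exists that does not encounter any node in $V^*$. Since $|p| = |V|$, the path traverses exactly $|V|$ edges and hence visits a sequence of $|V|+1$ nodes (with multiplicity), namely the source of $p$, the destinations of the first, second, \ldots, $|V|$-th edges. By the pigeonhole principle applied to these $|V|+1$ node occurrences in a graph with only $|V|$ distinct nodes, at least one node $v \in V$ is visited at least twice.

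Consider the sub-path $c$ of $p$ running from the first occurrence of $v$ to any subsequent occurrence of $v$. By construction $c \in E^*_{v,v}$, so $c$ is a cycle in $\graph$. Moreover, since $c$ is a sub-path of $p$, every node visited by $c$ is also visited by $p$, and by our assumption none of these lie in $V^*$. Thus $c$ is a cycle of $\graph$ that avoids $V^*$ entirely, which directly contradicts the unavoidability of $V^*$. Hence no such path $p$ exists, and every path of length $|V|$ must encounter at least one node of $V^*$.

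The argument is essentially routine; the only subtlety worth being explicit about is the convention that a path of length $|V|$ consists of $|V|$ edges and thus $|V|+1$ node occurrences, which is what makes pigeonhole applicable. Once this counting is pinned down, the remainder of the proof is immediate from the definition of unavoidability.
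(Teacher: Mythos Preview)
Your proof is correct and follows essentially the same pigeonhole argument as the paper: a path of length $|V|$ visits $|V|+1$ nodes, hence contains a repeated node and therefore a cycle, which by unavoidability must hit $V^*$. The only cosmetic difference is that you frame it as a proof by contradiction while the paper states it directly, but the substance is identical.
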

\begin{proof}
Such a path $p$ must visit $\card{V}+1$ nodes. 
At least one node in $V$ appears twice in $p$, implying that $p$ contains a cycle. Therefore, $p$ visits an unavoidable node in $V^*$.
\end{proof}
Our main result is the following.
\begin{theorem}
Consider a  system $S(\graph(V,E), \mset)$  that is linearly connected and has an unavoidable set of irreducible nodes. 
If $\jsr(S) = 1$, then the system is bounded.
\label{thm:non_def2}
\end{theorem}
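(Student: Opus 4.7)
The plan is to combine a local boundedness result at each irreducible node with a combinatorial decomposition of arbitrary paths in $\graph$. First, I would establish that for every $v^* \in V^*$ the cycle semigroup $\mathcal{C}_{v^*} := \{A_c : c \in E^*_{v^*,v^*}\}$ is uniformly norm-bounded; second, unavoidability of $V^*$ would let me factor any $A_p$ as a product of a bounded number of cycle matrices and short intermediate matrices, yielding the desired bound~(\ref{eq:boundedness}).

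For the first step, fix $v^* \in V^*$ and let $\widetilde{\mathcal{M}}_{v^*} \subset \reels^{n \times n}$ be the finite set of matrix products associated with the \emph{simple} cycles at $v^*$. The nodal irreducibility of $v^*$ transfers to classical irreducibility of $\widetilde{\mathcal{M}}_{v^*}$: any subspace invariant under every simple-cycle matrix is preserved by all their compositions, hence by every $A_c$ with $c \in E^*_{v^*,v^*}$, contradicting Definition~\ref{def:ni}. Next, I would show $\jsr(\widetilde{\mathcal{M}}_{v^*}) \leq 1$ by noting that a product $A_{c_k} \cdots A_{c_1}$ of $k$ generators equals $A_{[c_1,\ldots,c_k]}$ for a cycle in $\graph$ of length $L$ with $k \leq L \leq k L_{\max}$, where $L_{\max}$ bounds the lengths of simple cycles at $v^*$; the identity $\|A_{c_k} \cdots A_{c_1}\|^{1/k} = (\|A_{[c_1,\ldots,c_k]}\|^{1/L})^{L/k}$ combined with $\jsr(S) = 1$ gives $\limsup_k \|A_{c_k} \cdots A_{c_1}\|^{1/k} \leq 1$. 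Proposition~\ref{prop:arbbound} applied to $\widetilde{\mathcal{M}}_{v^*}$ then yields a uniform bound $K_{v^*}$ on the whole semigroup $\mathcal{C}_{v^*}$ generated by $\widetilde{\mathcal{M}}_{v^*}$.

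For the second step, I would peel off irreducible nodes from $p$ one by one. Let $v_0^*$ be the first element of $V^*$ visited by $p$, which by Lemma~\ref{lemma:unavoidable} occurs within the first $\card{V}$ edges, giving a short prefix $p_0$ ending at $v_0^*$. Let $c_0$ be the sub-path of $p$ from the first to the last visit of $v_0^*$: this is a cycle at $v_0^*$, so $\|A_{c_0}\| \leq K_{v_0^*}$. The remaining suffix $r_0$ starts at $v_0^*$ and never returns to it, so any further $V^*$-visit in $r_0$ lies in $V^* \setminus \{v_0^*\}$, and Lemma~\ref{lemma:unavoidable} forces either $|r_0| \leq \card{V}$ or the next irreducible node $v_1^*$ to be reached within $\card{V}+1$ more steps. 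Iterating, the recursion exhausts $V^*$ in at most $r := |V^*|$ iterations and produces a factorisation
\begin{equation*}
A_p \;=\; A_{p_r}\, A_{c_{r-1}}\, A_{p_{r-1}}\, \cdots\, A_{c_0}\, A_{p_0},
\end{equation*}
in which each $A_{p_i}$ comes from a path of length at most $\card{V} + 1$ and each $A_{c_i}$ from a cycle at some element of $V^*$. Setting $K_0 := \max\{\|A_p\| : |p| \leq \card{V}+1\}$ and $M := \max(K_0, \max_{v^* \in V^*} K_{v^*})$ yields $\|A_p\| \leq M^{2r+1}$, a uniform bound independent of $p$.

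The main technical obstacle is the bridge in the first step between the CJSR of $S$, normalised by the length of paths in $\graph$, and the JSR of $\widetilde{\mathcal{M}}_{v^*}$, normalised by the number of generators. The inequality $L/k \leq L_{\max}$ reconciles the two scales, but one must carefully verify that the resulting limit does not exceed $1$ and that classical irreducibility really transfers from $\mathcal{C}_{v^*}$ to its finite generating subset. The linear connectivity assumption supports the argument by excluding degenerate configurations in which cycle matrices would vanish and nodal irreducibility would become vacuously satisfied.
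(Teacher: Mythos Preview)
Your second step—the path factorisation through the unavoidable set $V^*$—is correct and is a nice idea, quite different from the paper's route. The genuine gap is in the first step, where you need $\widetilde{\mathcal{M}}_{v^*}$ to be simultaneously \emph{finite} and to \emph{generate} the full cycle semigroup $\mathcal{C}_{v^*}$. These two requirements are incompatible in general. If ``simple cycle at $v^*$'' is read in the paper's sense (a cycle not revisiting $v^*$ in its interior), then every cycle at $v^*$ is indeed a concatenation of such simple cycles, so the generation claim and the irreducibility transfer hold; but simple cycles of this kind can have unbounded length whenever $\graph$ with $v^*$ removed still contains a cycle—equivalently, whenever $\{v^*\}$ by itself is not unavoidable. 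In that case $\widetilde{\mathcal{M}}_{v^*}$ is infinite and a priori unbounded, $L_{\max}$ does not exist, and Proposition~\ref{prop:arbbound} does not apply. If instead ``simple'' is read in the usual graph-theoretic sense (no repeated vertex), the set is finite but fails to generate $\mathcal{C}_{v^*}$: a cycle such as $v^* \to a \to b \to a \to v^*$ is not a concatenation of vertex-simple cycles based at $v^*$, and one can choose matrices so that the unique vertex-simple cycle matrix has an invariant line while $v^*$ remains nodally irreducible. The theorem only assumes that the \emph{set} $V^*$ is unavoidable, not each of its members individually, and your first step implicitly relies on the latter.

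For comparison, the paper sidesteps this obstacle by arguing contrapositively: assuming unboundedness, an invariant-subspace argument (Lemma~\ref{lemma:ubndqualify}) forces the ``bounded'' subspace at every irreducible node to be $\{0\}$, and then compactness (Lemma~\ref{lemma:ubndquantify}) together with unavoidability yields a uniform time $T^*$ in which trajectories can double in norm while hopping from one irreducible node to another, giving $\jsr(S)\geq 2^{1/T^*}>1$. Linear connectivity is used there precisely to propagate unboundedness between irreducible nodes—a role that remains only informal in your outline. A way to rescue your direct strategy would be to pass to an auxiliary constrained system on the node set $V^*$ whose edges are the $\graph$-paths between consecutive $V^*$-visits (a finite set, by unavoidability of $V^*$), check that all its nodes are irreducible, and invoke the earlier all-nodes-irreducible Theorem~\ref{thm:non-def}; but that is a substantial modification rather than a detail to fill in.
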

The conditions guaranteeing boundedness in Theorem \ref{thm:non_def2} are split in two sets of conditions.\\
The first set of conditions (irreducibility and connectivity) is on the \emph{structure}  of the system $S$. 
As we will prove in Subsection \ref{sub:dec}, these conditions are decidable.\\
The second condition is $\jsr(S) = 1$.
Given any system $S$, the question of knowing whether $\jsr(S) = 1$ or not is known to \emph{undecidable} (see \cite{BlTsTBOA} for the special case of arbitrary switching). 
However if we relax this condition, we may still extract some interesting properties about the stability of $S$.
 This is discussed in Section 3, Proposition \ref{prop:non-def}.

In the previous conference version of this work \cite{PhJuASCF}, the concept of \emph{unavoidability} was left aside.
The main result of \cite{PhJuASCF} is in fact a special case of Theorem \ref{thm:non_def2}, where all the nodes of $\graph$ are irreducible.
This trivially implies the unavoidability of the set of irreducible nodes.

\begin{theorem}[ From \cite{PhJuASCF}, Theorem 2.1]
Consider a  system $S(\graph(V,E), \mset)$  that is linearly connected and has all of its nodes being irreducible. 
If $\jsr(S) = 1$, then the system is bounded.
\label{thm:non-def}
\end{theorem}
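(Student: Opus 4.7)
The plan is to establish boundedness pair by pair: I aim to show that for every pair of nodes $(v_0, w_0) \in V \times V$, the matrices $\{A_p : p \in E_{v_0, w_0}^*\}$ are uniformly bounded in norm, which suffices since $V$ is finite. I first treat the case $v_0 = w_0$. Let $\subs{A}_{v_0}^s = \{A_c : c \text{ is a simple cycle on } v_0\}$ be the finite set of simple-cycle matrices. Viewed as an arbitrary switching system, its joint spectral radius satisfies $\jsr(\subs{A}_{v_0}^s) \leq 1$: a $k$-fold product of simple cycles corresponds to a cycle at $v_0$ of total length at most $k|V|$, so the $1/k$ normalization in the definition of $\jsr$ costs at most a factor $|V|$ in the exponent of the bound coming from $\jsr(S) = 1$. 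Moreover, $\subs{A}_{v_0}^s$ is irreducible in the sense of Definition \ref{def:red}: any non-trivial subspace invariant under every simple-cycle matrix would also be invariant under every cycle matrix $A_c$ at $v_0$, by composing invariances along a simple-cycle decomposition $c = [c_1, \ldots, c_m]$, contradicting the nodal irreducibility of $v_0$. Proposition \ref{prop:arbbound} (together with the trivial case $\jsr(\subs{A}_{v_0}^s) < 1$) then yields a constant $K_{v_0}$ with $\|A_c\| \leq K_{v_0}$ uniformly over all cycles $c$ at $v_0$.

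Next I transfer this bound to paths with $v_0 \neq w_0$ via the irreducibility of $w_0$. For every $q \in E_{w_0, v_0}^*$ and every $p \in E_{v_0, w_0}^*$, the concatenation $[p, q]$ is a cycle at $v_0$, so $\|A_q A_p\| = \|A_{[p,q]}\| \leq K_{v_0}$. Let $\subs{X} = \bigcap_{q \in E_{w_0, v_0}^*} \ker{A_q}$. A direct computation shows that $\subs{X}$ is invariant under $A_c$ for every cycle $c$ at $w_0$: if $x \in \subs{X}$ and $q \in E_{w_0, v_0}^*$, then $[c, q] \in E_{w_0, v_0}^*$ and $A_q A_c x = A_{[c, q]} x = 0$. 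Irreducibility of $w_0$ forces $\subs{X} \in \{\{0\}, \reels^n\}$, and linear connectivity rules out $\subs{X} = \reels^n$ by providing some $q$ with $A_q \neq 0$. Hence $\subs{X} = \{0\}$, and by the descending chain condition on linear subspaces of $\reels^n$ a finite subfamily $q_1, \ldots, q_L \in E_{w_0, v_0}^*$ already has trivial intersection of kernels. The linear map $y \mapsto (A_{q_1} y, \ldots, A_{q_L} y)$ is then injective on $\reels^n$, hence bounded below: there exists $C > 0$ with $\|y\| \leq C \max_i \|A_{q_i} y\|$ for all $y$. Setting $y = A_p x$ gives $\|A_p x\| \leq C \max_i \|A_{q_i} A_p x\| \leq C K_{v_0} \|x\|$, so $\|A_p\|$ is bounded uniformly in $p \in E_{v_0, w_0}^*$, completing the argument.

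The step I expect to be most delicate is the inequality $\jsr(\subs{A}_{v_0}^s) \leq 1$, since the $\jsr$ of the simple-cycle system normalizes by the number of simple-cycle factors rather than the total path length; one must carefully track how the two notions relate, absorbing the short-path factors into a multiplicative constant, before invoking Proposition \ref{prop:arbbound}. The remainder of the argument is essentially linear-algebraic bookkeeping that couples the nodal irreducibility of $w_0$ with the linear connectivity hypothesis in order to pass from a uniform bound on cycles to a uniform bound on arbitrary paths.
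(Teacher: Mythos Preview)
Your argument has a gap at the very first step, where you claim that the set $\subs{A}_{v_0}^s$ of simple-cycle matrices is \emph{finite} and that a $k$-fold product of its elements has total path length at most $k\,|V|$. These claims and the subsequent cycle-decomposition step cannot all hold for a single notion of ``simple cycle.'' If ``simple'' means the paper's ``simple on $v_0$'' (the cycle does not revisit $v_0$), then every cycle at $v_0$ indeed decomposes as a concatenation of simple ones, so your irreducibility argument and the passage to all cycles are correct; but such cycles can be arbitrarily long whenever $\graph$ contains a cycle avoiding $v_0$ (go $v_0\to w$, loop $k$ times through $w$, return to $v_0$), so the set is infinite, the length bound $k\,|V|$ is false, and the conclusion $\jsr(\subs{A}_{v_0}^s)\le 1$ collapses --- in fact $\subs{A}_{v_0}^s$ need not even be a bounded set of matrices, so Proposition~\ref{prop:arbbound} does not apply. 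If instead ``simple'' means no repeated vertices, the length bound and finiteness hold, but an arbitrary cycle at $v_0$ does \emph{not} decompose as a concatenation of such cycles (in the two-node example just mentioned the only graph-simple cycle at $v_0$ is $v_0\to w\to v_0$, a single matrix, hence a reducible set), so both the irreducibility of $\subs{A}_{v_0}^s$ and the inference ``products in $\subs{A}_{v_0}^s$ bounded $\Rightarrow$ all $A_c$ with $c\in E^*_{v_0,v_0}$ bounded'' fail.

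Your transfer from cycles to paths via the kernel intersection at $w_0$ is correct and rather nice, but it rests on the cycle bound $K_{v_0}$ that has not been established. The paper avoids the whole issue by arguing the contrapositive: assuming the system is unbounded, nodal irreducibility forces the ``bounded-trajectory'' subspace $\subs{K}_v$ of~\eqref{eq:boundedSubspace} to be $\{0\}$ at every irreducible node (Lemma~\ref{lemma:ubndqualify}), a compactness argument upgrades this to uniform growth in bounded time (Lemma~\ref{lemma:ubndquantify}), and chaining such growth steps yields $\jsr(S)>1$. No finite generating set of cycles at a node is ever needed.
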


There are scenarios where the irreducibility of all nodes (Theorem \ref{thm:non-def}) is definitely too strong of a requirement, 
  but where systems do possess an unavoidable set of irreducible nodes (Theorem \ref{thm:non_def2}).

\begin{example}
Consider a system on two matrices $\mset=\{ A_1, A_2\}$
with 
$$A_1 = \begin{pmatrix} 0 & 1 \\ 1 & 0\end{pmatrix}, \, A_2 = \begin{pmatrix}
0 & 0 \\
0 & 1
\end{pmatrix}, $$
and constrained by the automaton of Figure \ref{fig:ex_failing}. 
\begin{figure}[!ht]
\centering
\includegraphics[scale = 0.4]{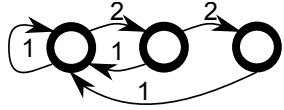}
\caption{We cannot activate mode 2 more than twice in a row.}
\label{fig:ex_failing}
\end{figure}
The node on the \emph{left} is irreducible since the matrices $A_1$ and $A_1A_2$ do not share common eigenvectors. The node is also \emph{unavoidable} since contained in every cycle in the graph.
However, the middle and right nodes are \emph{reducible}, with the invariant subspace $\im{A_2}$.
In conclusion, the system  satisfies to the conditions of Theorem \ref{thm:non_def2} and has only one irreducible node.
 \label{example:weakness}
\end{example}

We now move to the proof of Theorem \ref{thm:non_def2}.

\subsection{The proof of Theorem \ref{thm:non_def2}}
\label{subsec:proof}

We start with two lemmas.
\begin{lemma}
Consider a system $S(\graph, \mset)$, linearly connected and has an unavoidable set of irreducible nodes $V^* \subset V$. At any irreducible node $v \in V^*$, the subspace defined as 
\begin{equation}
\subs{K}_v = \{x \in \reels^n: \exists K < \infty: \forall p \in E_{v}^*, \|A_p x\| < K \} 
\label{eq:boundedSubspace}
\end{equation}
is either $\reels^n$ or $\{0\}$. Moreover, if $S$ is unbounded, then for any irreducible node $v \in V^*$, $\subs{K}_v = \{0\}$.
\label{lemma:ubndqualify}
\end{lemma}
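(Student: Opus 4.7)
The plan is to first prove the dichotomy by exploiting cycle-invariance at $v$ together with its irreducibility, and then deduce the \emph{moreover} clause by contraposition; both steps use the same geometric machinery. I will first check that $\subs{K}_v$ is a linear subspace of $\reels^n$ (by the triangle inequality and homogeneity of the norm). The key observation is that $\subs{K}_v$ is invariant under $A_c$ for every cycle $c \in E_{v,v}^*$: given $x \in \subs{K}_v$ bounded by $K_x$ along $E_v^*$ and any $p \in E_v^*$, the concatenation $[c,p]$ lies in $E_v^*$, so $\|A_p(A_c x)\| = \|A_{[c,p]} x\| < K_x$, hence $A_c x \in \subs{K}_v$. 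The irreducibility of $v$ (Definition \ref{def:ni}) then forces $\subs{K}_v \in \{\{0\}, \reels^n\}$.

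For the \emph{moreover} part I argue by contrapositive: assuming $\subs{K}_{v_0} = \reels^n$ for some $v_0 \in V^*$, I will show that $S$ is bounded. Applying the pointwise bound to any basis of $\reels^n$ yields a uniform $K_{v_0}$ such that $\|A_p\| \leq K_{v_0}$ for every $p \in E_{v_0}^*$. To transfer this bound to any other irreducible node $u \in V^*$, I introduce the subspace $\subs{R}_u = \sum_{r \in E_{v_0,u}^*} \im{A_r}$. Linear connectivity (Definition \ref{def:lc}) makes $\subs{R}_u$ non-trivial, and repeating the cycle-invariance argument (now using $[r,c] \in E_{v_0,u}^*$ for $c \in E_{u,u}^*$) together with the irreducibility of $u$ forces $\subs{R}_u = \reels^n$. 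Choosing a basis $\{y_i = A_{r_i} z_i\}_{i=1}^n$ of $\reels^n$ and expanding an arbitrary $y = \sum_i c_i y_i$, I bound $\|A_p y\|$ by $K_{v_0}$ times a $p$-independent constant, since $A_p A_{r_i} z_i = A_{[r_i,p]} z_i$ with $[r_i,p] \in E_{v_0}^*$; this produces a uniform $K_u$ with $\|A_p\| \leq K_u$ for every $p \in E_u^*$. Finally, for any $q \in E^*$ with $|q| \geq |V|$, Lemma \ref{lemma:unavoidable} lets me split $q = [q_1, q_2]$ so that $q_1$ reaches the first irreducible node $u \in V^*$, giving $|q_1| \leq |V|$ and $q_2 \in E_u^*$; with $M = \max_i \|A_i\|$, this yields $\|A_q\| \leq K_u M^{|V|}$. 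Shorter paths are bounded by $M^{|V|}$ directly, so $S$ is bounded, contradicting the hypothesis of the contrapositive.

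The main obstacle is the propagation step: linear connectivity only guarantees a \emph{single} path $r$ from $v_0$ to $u$ with $A_r \neq 0$, and the bound $\|A_p A_r\| \leq K_{v_0}$ on its own does not bound $A_p$ outside $\im{A_r}$. The resolution is to aggregate the images of $A_r$ over all admissible paths into a subspace that is automatically invariant under the cycle matrices at $u$, and to invoke the irreducibility of $u$ a second time --- in a different role than in the dichotomy --- to force that aggregate to equal $\reels^n$.
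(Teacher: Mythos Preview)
Your proof is correct. The dichotomy argument is essentially the paper's, just phrased more cleanly: you show directly that $\subs{K}_v$ is invariant under every cycle matrix $A_c$ and then invoke nodal irreducibility, whereas the paper reaches the same invariance by contradiction via the auxiliary span $\spans\{x,\,A_c x\}$.

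For the \emph{moreover} clause, your route is genuinely different from the paper's. The paper argues \emph{directly}: assuming $S$ is unbounded, it locates one irreducible node $w$ that is visited within the first $\card{V}$ steps of an unbounded product (Lemma~\ref{lemma:unavoidable}), concludes $\subs{K}_w=\{0\}$ from the dichotomy, and then propagates this \emph{emptiness} to every other irreducible node $w'$ via linear connectivity in the direction $w'\to w$: a single nonzero $A_p$, $p\in E^*_{w',w}$, suffices to exhibit some $x\notin\subs{K}_{w'}$. You instead argue the \emph{contrapositive} and propagate \emph{fullness} in the opposite direction $v_0\to u$: you form $\subs{R}_u=\sum_{r\in E^*_{v_0,u}}\im{A_r}$, observe it is cycle-invariant at $u$, and use the irreducibility of $u$ a second time to force $\subs{R}_u=\reels^n$, which lets you transport the uniform bound from $v_0$ to $u$; unavoidability is then used at the end to cover all long paths. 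The paper's argument is shorter and uses irreducibility only once per node and linear connectivity in its minimal form (existence of one nonzero path). Your argument is more constructive and, as you note, exploits irreducibility in two distinct roles; it effectively establishes that $\subs{K}_{v_0}=\reels^n$ at a single irreducible node already forces $\subs{K}_u=\reels^n$ at every irreducible $u$ and hence boundedness of $S$, which is a pleasant by-product. One small point worth stating explicitly in your final bound: since the irreducible node $u$ reached by $q_1$ depends on $q$, you should take $\max_{u\in V^*}K_u$ (finite because $V^*$ is finite) to obtain a truly uniform constant.
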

\begin{proof}
It is clear that $\subs{K}_v$ is a linear subspace. We prove the first part by contradiction. Take an irreducible node $v \in V^*$, and assume first that
$\subs{K}_v \neq \reels^n$, and second that there is a non-zero vector $ x \in \subs{K}_v$.
Since $v$ is irreducible, there must be $c \in E_{v,v}^*$ such that 
$y = A_c x \not \in \subs{K}_{v}$. Indeed, if there was no such cycle, the subspace defined as
$$\spans\{ x, \cup_{c \in E_{v,v}^*} A_c x \} \subseteq \subs{K}_{v} $$
would be invariant at $v$, contradicting the irreducibility of the node.
This implies the existence of unbounded trajectories from the point $y$, which in turn implies the contradiction $x \not \in \subs{K}_v$. \\
We now move to the second part of the Lemma and assume that $S$ is unbounded. In that case,
 there exists a node $v$ (not necessarily irreducible) and a sequence of paths $ \{p_k \in E_{v}^k\}_{k = 1,2,\ldots}$ 
 such that $\lim_{k\rightarrow \infty} \|  A_{p_k} \| = \infty$.\\ 
For $k \geq |V|$, $p_k$ must have visited an \emph{irreducible} node $w \in V^*$ (Lemma \ref{lemma:unavoidable}).
  Therefore, we have $\subs{K}_w = \{0\}$ from the first part of the Lemma.\\
If $w$ was the only irreducible node, then the proof is done. So assume there are other irreducible nodes, and take such a node $w'  \in V^*$.
 From the linear connectivity of $S$, there must be a path $p \in E_{w',w}^*$ and a vector $x \in \reels^n$ such that $A_p x  \neq 0$, 
 and since $\subs{K}_w = \{0\}$, $A_p x \not \in \subs{K}_w$. 
 Therefore it must be the case that $x \not \in \subs{K}_{w'}$ and so $\subs{K}_{w'} \neq \reels^n$.
  From the first part of the Lemma, we must conclude $\subs{K}_{w'} = \{0\}$. 
  Repeating the above for all irreducible nodes, we then conclude that $K_v = \{0\}$ for all $v \in V^*$.
\end{proof}

\begin{lemma}
Consider a system $S(\graph, \mset)$, linearly connected and with an unavoidable set of irreducible nodes $V^* \subset V$.
 If $S$ is unbounded then  for any scalar $\ell > 1$, 
 there must exist an integer $T_\ell \geq 1$ such that for any irreducible node $v \in V^*$ and  any  $x \in \reels^n,$
\begin{equation}
\exists\, p \in E_{v_T}^{*}, \card{p} \leq T_\ell: \|A_p x\| \geq \ell \|x\|.
\label{eq:proof_fromirredtoany}
\end{equation}
\label{lemma:ubndquantify}
\end{lemma}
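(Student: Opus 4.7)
The plan is to combine Lemma \ref{lemma:ubndqualify} with a compactness argument on the unit sphere of $\reels^n$, then glue the resulting local bounds across the finite set of irreducible nodes. Because $S$ is assumed unbounded, Lemma \ref{lemma:ubndqualify} already tells us that $\subs{K}_v = \{0\}$ at \emph{every} irreducible node $v \in V^*$. Unwrapping the definition (\ref{eq:boundedSubspace}), this means that for every nonzero $x \in \reels^n$ the quantity $\sup_{p \in E_{v}^*} \|A_p x\|$ is infinite, so in particular there exists a path $p_x \in E_v^*$ with $\|A_{p_x} x\| \geq \ell' \|x\|$ for whatever amplification $\ell' > \ell$ we like (say $\ell' = 2\ell$).

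Next, I would upgrade this pointwise amplification into a \emph{uniform} one at each fixed irreducible node $v$. For every unit vector $x$, the linear map $y \mapsto A_{p_x} y$ is continuous, so the strict inequality $\|A_{p_x} x\| \geq \ell' > \ell$ persists on some open neighborhood $U_x$ of $x$ on the unit sphere. The unit sphere of $\reels^n$ is compact, so a finite subcover $U_{x_1}, \ldots, U_{x_{m(v)}}$ exists. Setting $T_v = \max_i \card{p_{x_i}}$, any nonzero $z \in \reels^n$ has $z/\|z\|$ lying in some $U_{x_i}$, whence
\begin{equation*}
\|A_{p_{x_i}} z\| \;=\; \|z\| \cdot \|A_{p_{x_i}}(z/\|z\|)\| \;\geq\; \ell \|z\|,
\end{equation*}
and $p_{x_i} \in E_v^*$ has length at most $T_v$. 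The case $z = 0$ is trivial since $\graph$ is strongly connected, so some $p \in E_v^*$ always exists.

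Finally, since $V^*$ is a finite subset of the finite node set $V$, the quantity $T_\ell = \max_{v \in V^*} T_v$ is a well defined integer, and by construction it satisfies (\ref{eq:proof_fromirredtoany}) simultaneously for every irreducible node $v \in V^*$ and every $x \in \reels^n$.

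The main technical point to handle carefully is the continuity/compactness step: the pointwise bounds produced by Lemma \ref{lemma:ubndqualify} come with path lengths $\card{p_x}$ that a priori depend on $x$ and could be unbounded as $x$ ranges over the sphere. The trick is to introduce a small slack ($\ell' > \ell$) so that the bound survives on a \emph{neighborhood} of each $x$; compactness of the sphere then forces finiteness of the path lengths involved. Without this slack, a sequence $x_n \to x$ could in principle require a sequence $p_{x_n}$ whose lengths blow up, and the argument would stall.
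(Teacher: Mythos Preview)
Your proof is correct. The paper's argument uses the same three ingredients---Lemma \ref{lemma:ubndqualify} giving $\subs{K}_v=\{0\}$, compactness of the unit sphere, and finiteness of $V^*$---but packages them as a proof by contradiction via \emph{sequential} compactness: assuming no uniform $T_\ell$ exists, one obtains for each $T$ a node $v_T\in V^*$ and a unit vector $x_T$ with $\|A_p x_T\|<\ell$ for every $p\in E_{v_T}^*$ of length at most $T$; pigeonholing on the finite set $V^*$ fixes a single node $w$, and a convergent subsequence of the $x_T$'s yields a limit $x_*$ with $\|A_p x_*\|\le\ell$ for \emph{all} $p\in E_w^*$, contradicting $\subs{K}_w=\{0\}$. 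Your open-cover argument with the slack $\ell'>\ell$ is the direct counterpart and is arguably cleaner, since it avoids the double subsequence extraction; the paper's version, on the other hand, needs no explicit neighborhood construction. Both are standard and equally rigorous.
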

\begin{proof}

Assume by contradiction that the opposite of the Lemma's claim is true. 
There exists a scalar $\ell  > 1$ such that for all $T \geq 1$, there is an irreducible node $v_T \in V^*$ and a vector $x_T \neq 0$
satisfying
\begin{equation*}
\forall\, p \in E_{v_T}^{*}, \card{p} \leq T: \|A_p x_T\| < \ell \|x_T\|.
\end{equation*}
We can further assume, without loss of generality, that for all $T$, $\|x_T\| = 1$.
Since there are a finite amount of nodes in $V^*$, there must be a sequence $\{T_k\}_{k = 1, 2, \ldots}$ and a node $w \in V^*$, such that $v_{T_k} = w$ for all $k = 1, 2, \ldots$.
The vectors $\{x_{T_k}\}_{k = 1, 2, \ldots}$ form a  \emph{sequence} of vectors  on the unit ball $\{x \in \reels^n:\, \|x\| = 1\}$.
 Since the unit ball is a \emph{compact} set, there exists a subsequence of  $\{x_{T_k}\}_{k = 1, 2, \ldots}$ converging,
  as $k$ goes to infinity, to a limit point $x_*$ on the unit ball. This limit point is such that
$$\forall p \in E_{w}^*: \, \|A_p x_* \| \leq \ell \|x_*\|,$$
which implies by definition that $x_* \in \subs{K}_w$. 
By Lemma \ref{lemma:ubndqualify}, this contradicts the fact that, since $w$ is irreducible, it should have $\subs{K}_v = \{0\}$.

\end{proof}

\begin{proof}[Proof of Theorem \ref{thm:non_def2}]
We consider a system $S(\graph(V,E), \mset)$ which is linearly connected and with an unavoidable set of irreducible nodes  $V^* \subset V$. 
We will show that if $S$ is \emph{unbounded}, then $\jsr(S) > 1$. Theorem  \ref{thm:non_def2} will then be proven by contraposition.

To do so, assume that $S$ is \emph{unbounded}.
We begin by proving a result similar to Lemma \ref{lemma:ubndquantify}, 
but which  only involves irreducible nodes.

\emph{Claim: There exists an integer $T^* \geq 1$ such that, for any irreducible node $v \in V^*$, and any non-zero vector $x \in \reels^n,$}
\begin{equation}
\exists w \in V^*, \, \exists\, p \in E_{v,w}^{*},\,  \card{p} \leq T^*:\, \|A_p x\| \geq 2 \|x\|.
\label{eq:proof_fromirredtoirred}
\end{equation}

For proving the Claim, we first define $K  = \max_{A  \in \mset} \|A\|$.
Since  induced matrix norms are sub multiplicative, 
we have that for all $p \in E^*$, $\|A_p\| \leq K^{\card{p}}$.
Also $K > 1$ since we assumed $S$ to be unbounded.\\
Observe that Lemma \ref{lemma:ubndquantify} holds under our current set of assumptions. 
Moreover, for any choice of the constant $\ell$, paths $p$ satisfying (\ref{eq:proof_fromirredtoany}) must be such that $$ \ell \leq \| A_p\| \leq K^{\card{p}}.$$
Therefore, for the choice $\ell = 2 K^{\card{V}},$
 we can guarantee that the paths in (\ref{eq:proof_fromirredtoany}) have a \emph{minimum length} of $\card{V} + 1$. \\
We now invoke the concept of unavoidability.
 Lemma \ref{lemma:unavoidable} guarantees that an irreducible node is encountered within the last $\card{V}$ edges of the paths considered above.
Take any irreducible node $v$ and any $x \in \reels^n$. Let $p$ be a path satisfying (\ref{eq:proof_fromirredtoany}) for $\ell = 2K^{\card{V}}$. 
Let $w \in V^*$ be the last irreducible node encountered along the path, and let $v' \in V$ be the destination of the path. Assume for the moment that $v' \neq w$.
 We can write $p = [p_1, p_2]$, with $p_1 \in E_{v,w}$ and $p_2 \in E_{w,v'}$, and with $\card{p_2} \leq \card{V}$. 
By definition of $p$, from our choice of $\ell$, and by sub multiplicativity of the norm, we have 
$$ 2K^{\card{V}} \|x\| \leq \|A_{p_2}A_{p_1}x\|\leq K^{\card{p_2}} \|A_{p_1}x\| \leq  K^{\card{V}} \|A_{p_1}x\|. $$
If $w = v'$, the discussion is even simpler and we let  $p_1 = p$, $\|A_{p_1}x\| \geq  2K^{\card{V}} \|x\|$.
We have shown the existence of a path $p_1$ whose origin is $v$ and destination is an irreducible node $w$ such that $\|A_{p_1}x\|\geq 2 \|x\|$.
This being done from an arbitrary irreducible node $v$ and arbitrary $x \in \reels^n$, we conclude that our Claim holds true.

We now prove that $\jsr(S) > 1$.
Take any irreducible node $v_0 \in V^*$ and any non-zero $x \in \reels^n$.
From (\ref{eq:proof_fromirredtoirred}),  there exists a sequence $\{v_k\}_{k = 1,2,\ldots}$ of nodes in $V^*$ and a corresponding sequence of paths $\{p_k\}_{k = 1,2, \ldots}$, each with a length $T_k \leq T^*$ and with $p_k \in E^{T_k}_{v_{k-1}, v_k}$, that satisfy for all $k = 1,2,\ldots$, 
$$ \|A_{p_k} \cdots A_{p_1} x\| \geq 2^{k} \|x\|.$$
We then obtain a lower bound on $\jsr(S)$ from (\ref{eq:CJSR}):
$$ \jsr(S) \geq \lim_{ k \rightarrow \infty } \left ( \|A_{p_k} \cdots A_{p_1} \|^{1 / \sum{T_k} } \right ) \geq 
\lim_{ k \rightarrow \infty } \left (  \|A_{p_k} \cdots A_{p_1} \|^{ 1/{(k T^*)} } \right ) \geq 2^{1 /T^*} > 1.
 $$  
 We conclude that a system $S(\graph, \mset)$, linearly connected and with an unavoidable set of nodes is \emph{unbounded} if $\jsr(S) > 1$. If $\jsr(S) \leq 1$, the system must be bounded.
\end{proof}

\subsection{Decidability of nodal irreducibility and linear connectivity}
\label{sub:dec}

Knowing whether a finite set of matrices $\mset$ is irreducible is decidable 
(see e.g. \cite{ArPeTCIS} and references therein). That is, given a \emph{finite} set of matrices $\mset$, there exists an algorithm, running in finite time,  returning whether the set is irreducible or not.\\
In this subsection, we show that knowing whether  a system $S(\graph, \mset)$ is linearly connected and has an unavoidable set of irreducible nodes is decidable as well. To do so, we show that the irreducibility of a single node in the system is equivalent to the irreducibility of a finite set of matrices, and that a similar result holds for linear connectivity. 

We begin by a technical lemma. The authors believe the result to be folklore in linear algebra, but its proof, while perhaps not presented in its most compact form, reveals to be insightful for the proof of Theorem~\ref{thm:bound}.

\begin{lemma}
Let $\mset = \{A_1, A_2, \ldots \}$ be a (possibly infinite) set of matrices in $\reels^{n \times n}$. 
Given a non-trivial subspace $\subs{X} \subset \reels^n$ of dimension $1 \leq d < n$ and $x \in \subs{X}$, $x \neq 0$, 
if for all products of matrices of length $t \leq d$,
$$ A_{\sigma(t)} \cdots A_{\sigma(1)} x \in \subs{X} ,$$
with $A_{\sigma(i)} \in  \mset$, $1 \leq i \leq t$ then for any product $A^*$ from matrices in $\mset$,
 $$ A^* x \in \subs{X}.$$
 \label{lemma:exitSloop}
\end{lemma}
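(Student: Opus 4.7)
The plan is to prove the contrapositive-friendly statement that the orbit of $x$ under all words in $\mset$ is contained in $\subs{X}$, by showing that the nested family of subspaces generated by words of bounded length must stabilize after at most $d$ steps. Concretely, for each $t \geq 0$, I would define
\[
\subs{Y}_t \;=\; \spans\bigl\{\, A_{\sigma(k)} \cdots A_{\sigma(1)}\, x \;:\; 0 \leq k \leq t,\; A_{\sigma(i)} \in \mset \,\bigr\},
\]
so that $\subs{Y}_0 = \spans\{x\}$ (nonzero since $x \neq 0$) and $\subs{Y}_0 \subseteq \subs{Y}_1 \subseteq \subs{Y}_2 \subseteq \cdots$. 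The hypothesis of the lemma is precisely that $\subs{Y}_d \subseteq \subs{X}$, and the conclusion I want is that $\subs{Y}_s \subseteq \subs{X}$ for every $s \geq 0$.

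The key observation is a ``stabilization'' claim: if $\subs{Y}_{t+1} = \subs{Y}_t$ for some $t$, then $\subs{Y}_s = \subs{Y}_t$ for every $s \geq t$. I would prove this by noting that $\subs{Y}_{t+1}$ is exactly $\subs{Y}_t + \sum_{A \in \mset} A\, \subs{Y}_t$; equality with $\subs{Y}_t$ therefore forces $A\, \subs{Y}_t \subseteq \subs{Y}_t$ for every $A \in \mset$, and an immediate induction extends this to all longer products. With this claim in hand, I would apply a dimension/pigeonhole argument to the chain
\[
\subs{Y}_0 \subseteq \subs{Y}_1 \subseteq \cdots \subseteq \subs{Y}_d \subseteq \subs{X}.
\]
Since $\dim(\subs{Y}_0) \geq 1$ and $\dim(\subs{Y}_d) \leq d$, at most $d-1$ of the $d$ inclusions in the chain can be strict, so there must exist an index $t^* \in \{0, 1, \ldots, d-1\}$ with $\subs{Y}_{t^*} = \subs{Y}_{t^*+1}$. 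The stabilization claim then yields $\subs{Y}_s = \subs{Y}_{t^*} \subseteq \subs{Y}_d \subseteq \subs{X}$ for every $s \geq t^*$, and trivially for $s < t^*$ as well. Hence every product $A^* x$ lies in $\subs{X}$.

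The main thing to be careful about, rather than a genuine obstacle, is that $\mset$ is allowed to be infinite: the definition of $\subs{Y}_t$ still makes sense as a subspace of $\reels^n$ (hence automatically finite-dimensional), and the stabilization argument uses only the invariance property $A\, \subs{Y}_{t^*} \subseteq \subs{Y}_{t^*}$ for every individual $A \in \mset$, so no compactness or finiteness of $\mset$ is needed. The argument is essentially a Krylov/Cayley--Hamilton-style saturation restricted to the subspace $\subs{X}$, and its insight -- that invariance of a finite-dimensional subspace under a family of operators is detected by words of length at most $\dim(\subs{X})$ -- is exactly what will be reused when testing nodal irreducibility algorithmically in the proof of Theorem~\ref{thm:bound}.
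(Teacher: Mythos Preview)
Your proof is correct and rests on the same core insight as the paper's --- that in a $d$-dimensional subspace, dimension counting forces the orbit of $x$ to saturate after at most $d$ steps --- but the packaging is different. The paper argues by contradiction on a single hypothetical shortest escaping product: it fixes such a product of length $T>d$, observes that the $d+1$ vectors $x,\,A_{\sigma(1)}x,\,\ldots,\,A_{\sigma(d)}\cdots A_{\sigma(1)}x$ along its initial segment are linearly dependent in $\subs{X}$, and uses that dependence to exhibit a strictly shorter escaping product. Your argument is the cleaner Krylov-style version, working with the whole chain of orbit subspaces $\subs{Y}_t$ at once and invoking stabilization. What your version gains in elegance the paper's version trades for a more ``operational'' form: in the subsequent proof of Theorem~\ref{thm:bound} the authors reuse precisely the trajectory-shortening mechanism (deleting redundant sub-cycles from a concatenation $[c_w^1,\ldots,c_w^{T-1}]$), and their proof of the lemma was deliberately written to expose that mechanism.
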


\begin{proof}
We show by contradiction that the shortest product under which the image of $x$ no longer lies in $\subs{X}$ must be of length at most $d$.
Assume that $T > d$ is the length of a shortest product such that
\begin{equation}
 A^* A_{\sigma(d)} \cdots A_{\sigma(1)} x \not \in \subs{X},
 \label{eq:lemmasubspaceLeave}
\end{equation} 
where $A^*$ is itself a product of length $T-d$ of matrices in $\mset$. \\
Since the vectors $x, A_{\sigma(1)}x, \ldots, A_{\sigma(d)}\cdots A_{\sigma(1)}x$ are $d+1$ vectors that belong to $\subs{X}$ which is of dimension $d$, we can state the following:
\begin{equation}
 \exists\, 1 \leq k \leq d : \,  A_{\sigma(k)}  \cdots A_{\sigma(1)} x \in \spans \left \{ x, A_{\sigma(1)} x,\ldots,  A_{\sigma(k-1)}\cdots A_{\sigma(1)} x \right \}.
 \label{eq:lemmasubspaces}
\end{equation}
Assume for the moment that $k = d$. From (\ref{eq:lemmasubspaceLeave}) and (\ref{eq:lemmasubspaces}),  we can write
$$A^* A_{\sigma(d)} \cdots A_{\sigma(1)} x \in  
\spans \left \{ A^* x, A^* A_{\sigma(1)} x,\ldots,  A^* A_{\sigma(d-1)}\cdots A_{\sigma(1)} x \right \} \not \subseteq \subs{X}. $$
After observing that all products of matrices involved in the span on the right are of length at most $T - 1$,
 and that at least one vector in this span does not lie in $\subs{X}$, we conclude that $T$ is not the shortest length.\\
In the case $k < d$, we can repeat the above process by first multiplying (\ref{eq:lemmasubspaces}) on the left by $\prod_{i = k+1}^d A_{\sigma(i)}$ and then by $A^*$. The conclusion remains unchanged, and we have proved the Lemma by contradiction.
\end{proof}

\begin{theorem}
 Consider a system $S(\graph(V,E), \mset)$, $\mset \subset \reels^{n \times n}$.
 Given an irreducible node $v$ and a non-trivial linear subspace 
 $\subs{X} \subset \reels^n,$ the \emph{shortest cycle} $c \in E_{v,v}^*$ such 
 that $A_c\subs{X} \nsubseteq \subs{X}$ is of length 
 $$	|c| \leq 1+n(\card{V} - 1). $$  
 If a pair of nodes $v,w$ is linearly connected, then the shortest path $p \in E_{v,w}^*$ with $A_p \neq 0$ is of length
 $$ |p| \leq 1 + n(\card{V} - 2).$$
\label{thm:bound}
\end{theorem}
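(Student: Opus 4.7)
The plan is to recast both statements as first-passage times of a monotone sequence of reachable subspaces, and to close using a dimension-counting monovariant, in the spirit of the cut-and-paste proof of Lemma~\ref{lemma:exitSloop}.

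For the first claim, I would fix the irreducible node $v$ and the non-trivial subspace $\subs{X} \subset \reels^n$ of dimension $d \leq n-1$. For each $u \in V$ and $t \geq 0$ I introduce
\[
\subs{Z}_u^{(t)} \;=\; \sum_{\substack{p \in E^*_{v,u} \\ |p| \leq t}} A_p\, \subs{X},
\]
with the convention that the unique length-zero path sits at $v$ and contributes the identity, so $\subs{Z}_v^{(0)} = \subs{X}$ and $\subs{Z}_u^{(0)} = \{0\}$ otherwise. These satisfy the monotone recursion
\[
\subs{Z}_u^{(t+1)} \;=\; \subs{Z}_u^{(t)} \,+\, \sum_{(u', u, \sigma) \in E} A_\sigma\, \subs{Z}_{u'}^{(t)},
\]
and the inclusion $\subs{Z}_v^{(t)} \subseteq \subs{X}$ is equivalent to every cycle at $v$ of length at most $t$ sending $\subs{X}$ into itself. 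Hence the shortest escape cycle has length $t^{*} = \min\{t \geq 1 : \subs{Z}_v^{(t)} \nsubseteq \subs{X}\}$, which is finite since $v$ is irreducible.

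The decisive observation is a stability lemma: if for some $t_0$ we have $\subs{Z}_u^{(t_0+1)} = \subs{Z}_u^{(t_0)}$ for every $u$, then the recursion propagates the equality to every later time, so that $\subs{Z}_v^{(t)} \subseteq \subs{X}$ forever, contradicting the irreducibility of $v$. Therefore the monovariant $s(t) = \sum_{u \in V} \dim \subs{Z}_u^{(t)}$ must strictly increase at every step $t = 0, 1, \ldots, t^{*}-1$. On that range $\dim \subs{Z}_v^{(t)} \leq d$ while $\dim \subs{Z}_u^{(t)} \leq n$ for the $|V|-1$ other nodes, so $s(t) \leq d + n(|V|-1)$. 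Combined with $s(0) = d$ and strict growth, this yields $t^{*} - 1 \leq n(|V|-1)$, i.e.\ $|c| = t^{*} \leq 1 + n(|V|-1)$.

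For the second claim (in the non-trivial case $v \neq w$) I would mirror the construction with
\[
\subs{Y}_u^{(t)} \;=\; \sum_{\substack{p \in E^*_{v,u} \\ |p| \leq t}} A_p\, \reels^n,
\]
initialised by $\subs{Y}_v^{(0)} = \reels^n$ and $\subs{Y}_u^{(0)} = \{0\}$ otherwise. Linear connectivity ensures that $t^{**} = \min\{t \geq 1 : \subs{Y}_w^{(t)} \neq \{0\}\}$ is finite, and the same stability-versus-strict-growth dichotomy applies: for $t < t^{**}$ we have $\dim \subs{Y}_w^{(t)} = 0$ while the other $|V|-1$ nodes contribute at most $n$ each, so $s(t) \leq n(|V|-1)$; with $s(0) = n$ and strict growth this gives $t^{**}-1 \leq n(|V|-2)$, i.e.\ $|p| \leq 1 + n(|V|-2)$. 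The main bookkeeping obstacle is settling the initial-condition conventions (in particular the role of the length-zero path) and verifying the equivalence between the reachable-subspace conditions and the existence of escape cycles or nonzero products; once those are pinned down, both parts follow from a single "strict growth of a bounded monovariant" principle.
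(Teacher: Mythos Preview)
Your argument is correct and takes a genuinely different route from the paper's. The paper fixes a shortest escape cycle $c_v$ and argues structurally: it first shows that $c_v$ is simple at $v$, and then that any other node $w$ is visited at most $n$ times, since otherwise Lemma~\ref{lemma:exitSloop} (applied to the simple sub-cycles at $w$ and the subspace $\subs{Y}=\{x:A_{p_{w,v}}x\in\subs{X}\}$) would let one excise a sub-cycle and strictly shorten $c_v$; counting visits then gives $|c_v|\leq 1+n(|V|-1)$. The linear-connectivity bound is obtained by the same cut-and-paste mechanism with $\subs{Y}=\ker{A_{p_{w,v'}}}$. Your approach instead runs a forward reachability iteration over \emph{all} nodes simultaneously and tracks the single global monovariant $s(t)=\sum_{u}\dim\subs{Z}_u^{(t)}$: stagnation of $s$ freezes the whole tuple via the recursion, contradicting eventual escape, so $s$ must strictly increase up to time $t^*-1$, and the bound falls out of the ceiling $s(t^*-1)\leq d+n(|V|-1)$ (resp.\ $n(|V|-1)$) together with $s(0)=d$ (resp.\ $n$). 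Your method is more uniform and handles both statements with identical machinery, bypassing the case analysis on repeated nodes; the paper's excision argument, by contrast, yields additional structural information about the shortest cycle (simplicity at $v$, visit multiplicity at most $n$ at every other node) that your monovariant does not directly see.
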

\begin{proof}
We start with the first part of the theorem. 
Consider an irreducible node $v$, a non-trivial subspace $\subs{X}$ and a cycle $c_v \in E_{v,v}^*$ \emph{with the smallest length} satisfying 
\begin{equation}
A_{c_v} \subs{X} \not \subseteq \subs{X}.
\label{eq:cvoutofX}
\end{equation}
We are now going to establish two properties of $c_v$, allowing us to show that $\card{c_v} \leq 1+n(\card{V}-1).$

\textit{Claim 1 : The cycle $c_v$ is \emph{simple} on the node $v$.}\\
The simplicity of $c_v$ means that there are no partitions $c_v = [c_v^1, c_v^2]$ where $c_v^1, \, c_v^2 \in E^*_{v,v}$. If there was such a partition, then  either $A_{c_v^1}\subs{X} \not \subset \subs{X}$, or $A_{c_v^2} \subs{X} \not \subset \subs{X}$. Thus, $c_v$ would not be a shortest cycle satisfying (\ref{eq:cvoutofX}).

\textit{Claim 2 : The  cycle $c_v$ never visits a node $w \neq v$ more than $n$ times.}\\
This part of the proof relies on Lemma \ref{lemma:exitSloop} and its proof.\\
Assume by contradiction that $c_v$ visits a node $w \neq v$ for $T > n$ times. We can write $$ c_v =[p_{v,w}, c_w, p_{w,v}],$$
where $p_{v,w} \in E^*_{v,w}$ and $p_{w,v} \in E^*_{w,v}$ are paths that visit $v$ and $w$ at most once as source or destination; and $c_w$ is a cycle on the node $w$. In turn, we can write $$ c_w = [c_w^1, \ldots, c_w^{T-1}],$$
where the $c_w^k$, $k = 1, \ldots, T-1$ are \emph{simple cycles} on $w$. 
Note that the above does imply that $w$ is visited $T$ times overall in $c_v$.\\
We now define the subspaces $\subs{X}_w = A_{ p_{v,w}} \subs{X}$ and  
$ \subs{Y} = \{ x \in \reels^n: A_{p_{w,v}} x \in \subs{X}\}. $ Since $c_v$ satisfies (\ref{eq:cvoutofX}) and is of shortest length, these subspaces must satisfy to the following two relations:
\begin{equation*}
\begin{aligned}
& \subs{X}_w \subseteq \subs{Y}, \\
& A_{c_w} \subs{X}_w \not \subseteq \subs{Y},
\end{aligned}
\end{equation*}
and consequently $ 1 \leq \dim(\subs{Y}) \leq n-1 $. The second relation above is equivalent to
$$ \left ( A_{c_w^{T-1}} \cdots A_{c_w^n} \right ) A_{c_w^{n-1}} \cdots A_{c_w^1} \subs{X}_w \not \subseteq \subs{Y},$$
and from the proof of Lemma \ref{lemma:exitSloop}, there must exist $1 \leq k \leq n-2$ such that
 $$ \left ( A_{c_w^{T-1}} \cdots A_{c_w^n} \right ) A_{c_w^{k}} \cdots A_{c_w^1} \subs{X}_w \not \subseteq \subs{Y}.$$
 This contradicts the fact that $c_v$ is of shortest length. Indeed, defining $c_w' = [c_w^1, \ldots, c_w^k, c_w^n, \ldots c_w^{T-1}]$, 
 we have that $c_v' = [p_{v,w}^{}, c_w', p_{w,v}^{}]$ satisfies (\ref{eq:cvoutofX}) and is such that $|c_v'| < |c_v^{}|$. This concludes the proof of the second claim.
 
Having established these properties, we can easily compute an upper bound on the length of $c_v$: there are at most $n(\card{V}-1)$ visits to nodes other than $v$,
 requiring a path of length at most $n(\card{V}-1)-1$. 
 To form a simple cycle on $v$ from this path,
  we add two edges from and to $v$, 
  respectively at the beginning and the end of the path, 
  giving us a cycle of length $1 + n(\card{V}-1)$ at most.

The second part of the theorem relates to linear connectivity. 
The proof is similar in its ideas, so we will only highlight the main differences.
Consider two linearly connected nodes $v$ and $v'$ and let the path $p_{v,v'} \in E^*_{v,v'}$ be a shortest path satisfying $A_{p_{v,v'}} \neq 0$. 
Similarly to Claim 1, we can show that $p_{v,v'}$ cannot visit neither $v$ or $v'$ more than once (as origin and destination, resp.).
Claim 2 still holds, any node $w$ visited by $p_{v,v'}$ cannot be visited more than $n$ times. 
The proof proceeds by writing $p_{v,v'} = [p_{v,w}, c_w, p_{w,v'}]$ as before,
defining $\subs{Y} = \ker{A_{p_{w,v'}}}$, 
and applying Lemma \ref{lemma:exitSloop} to escape  $\subs{Y}$ from $\im{A_{p_{v,w}}}$. 
Finally, $p_{v,v'}$ visits $\card{V}-2$ nodes $n$ times at maximum, 
to which we need to add two edges for connecting $v$ and $v'$,
 giving us a bound of $|p_{v,v'}| \leq 1+n(\card{V}-2)$.
\end{proof}

Based on the above result and on the fact that irreducibility (Definition \ref{def:red}) is decidable, 
we are able to formulate a decision algorithm for verifying that a system is linearly connected and has an unavoidable  set of irreducible nodes. 
We present this algorithm in three distinct parts. 
First (Algorithm \ref{algo:Connect}) checks for the linear connectivity of the system, 
second (Algorithm \ref{algo:Irred})  marks all the irreducible nodes,
and third (Algorithm \ref{algo:Unavoidability}) checks the unavoidability of the set of irreducible nodes.

\begin{algorithm}[!ht]
\caption{Checking linear connectivity of $S(\graph(V,E), \mset)$}
\begin{algorithmic}
\For {all pairs of nodes $v,w \in V$}
		\If {$\forall p \in E^*_{v,w}, \card{p} \leq 1+n(\card{V}-2): \, A_p = 0$}
		\State{Return False } \Comment{the pair v,w is disconnected.}
		\EndIf
\EndFor
\State{Return True } \Comment{all pairs are Linearly Connected.}
\end{algorithmic}
\label{algo:Connect}
\end{algorithm}
\begin{algorithm}[!ht]
\caption{Marking the irreducible nodes of $S(\graph(V,E), \mset)$}
\begin{algorithmic}
\State {Initialize set of irreducible nodes: $V^* = \{\}$.}
\For {all nodes $v \in V$}
\State{Initialize set of matrices $C_v = \{\}$.}
	\For{all cycles $c \in E_{v,v}^*$, with $\card{c} \leq 1+n(\card{V}-1)$}
	\State{Update the matrix set: $C_v \gets C_v \cup A_c$.}
	\EndFor
	\If{the set $C_v$ is irreducible}
	\State{Mark the node $v$ as irreducible: $V^* \gets V^* \cup v$.}
	\EndIf
\EndFor 
\State{Return $V^*$}
\end{algorithmic}
\label{algo:Irred}
\end{algorithm}
\begin{algorithm}[!ht]
\caption{Checking unavoidability of a subset $V^* \subset V$ in the graph $\graph(V,E)$}
\begin{algorithmic}
\State {Construct the graph $\graph'(V \backslash V^*, E')$ by removing all nodes in $V^*$ from $\graph$.}
\If {$\graph'$ has no cycle}
\State{Return True}
\Comment{the set $V^*$ is unavoidable}
\EndIf
\State{Return False}
\end{algorithmic}
\label{algo:Unavoidability}
\end{algorithm}

\begin{remark}
The decidability of nodal irreducibility is guaranteed from the fact that the irreducibility of matrix sets is decidable.
However, to the best of our knowledge, it is not known whether deciding the irreducibility of a set $\mset$ is NP-hard or not.
Moreover, even if we assumed the existence of an algorithm deciding the irreducibility of a set $\mset$ in a number of steps polynomial in the size of  $\mset$,  Algorithm \ref{algo:Irred} would still require an exponential amount of steps. Indeed, the algorithm proceeds by computing all cycles of length $1+n(\card{V}-1)$. 
Even for two nodes, the amount of cycles involved is $\bigO(\card{E}^{\left ( 1+n(\card{V}+1) \right )})$.
\label{rem:polytime}
\end{remark}
Before ending this subsection, we show that the bounds of Theorem \ref{thm:bound} are \emph{tight}.
\begin{proposition}
For all integers $n, m \geq 1$, one can build a system $S(\graph(V,E), \mset)$, $\mset \subset \reels^{n \times n}$,
 $\card{V} = m$,  
for which a cycle of length $1 + n(m-1)$ is needed to exit a particular non-trivial subspace $\subs{X} \subset \reels^n$ from a particular node $v \in V$. This holds even when $\mset$ contains two matrices.
\end{proposition}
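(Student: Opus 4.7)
The plan is to build an $m$-node graph in which every cycle at a distinguished node $v_1$ has length $1 + j(m-1)$ for some integer $j \geq 1$ and matrix product $A_2 B^{j}$, where $B := A_1^{m-1}$. I will then choose $A_1, A_2$ and a one-dimensional subspace $\subs{X}$ so that $A_2 B^{j}$ preserves $\subs{X}$ for $j = 1, \ldots, n-1$ but escapes $\subs{X}$ at $j = n$, forcing the shortest escaping cycle to have length exactly $1 + n(m-1)$. The degenerate cases $n = 1$ (no non-trivial subspace exists) and $m = 1$ (a single self-loop exiting $\subs{X}$ does the job) can be dismissed immediately, so I focus on $n, m \geq 2$.

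For the graph, I would take $V = \{v_1, \ldots, v_m\}$ with label-$1$ edges $v_i \to v_{i+1}$ for $i = 1, \ldots, m-1$ and $v_m \to v_2$, together with a single label-$2$ edge $v_m \to v_1$. This automaton is strongly connected, and since every node other than $v_m$ has a unique outgoing edge, any cycle at $v_1$ is entirely determined by the number $j \geq 1$ of times the segment $v_2 \to v_3 \to \cdots \to v_m$ is traversed before leaving through $v_m \to v_1$. A direct count shows that such a cycle has exactly $j(m-1)$ label-$1$ edges followed by one label-$2$ edge, hence length $1 + j(m-1)$ and matrix $A_2 A_1^{j(m-1)} = A_2 B^{j}$.

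For the matrices, I would choose $A_1 = \mathrm{diag}(\mu_1, \ldots, \mu_n)$ with pairwise distinct positive entries, so that $B := A_1^{m-1}$ is again diagonal with pairwise distinct positive entries. Then $u := e_1 + \cdots + e_n$ is a cyclic vector for $B$, since $[u, Bu, \ldots, B^{n-1}u]$ is a nonsingular Vandermonde matrix. I set $\subs{X} := \spans\{u\}$ and define $A_2$ on the basis $\{u, Bu, \ldots, B^{n-1}u\}$ by $A_2 u := e_1$ and $A_2 B^{j} u := 0$ for $j = 1, \ldots, n-1$, extending linearly.

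Verifying the claim is then short: for $j = 1, \ldots, n-1$ we have $A_2 B^{j} \subs{X} = \spans\{A_2 B^{j} u\} = \{0\} \subset \subs{X}$, so all shorter cycles preserve $\subs{X}$; for $j = n$, Cayley--Hamilton applied to $B$ yields $B^{n} u = \sum_{k=0}^{n-1} \alpha_k B^{k} u$ with $\alpha_0 = (-1)^{n+1}\prod_i \mu_i^{m-1} \neq 0$, hence $A_2 B^{n} u = \alpha_0 e_1 \notin \spans\{u\}$ and the cycle of length $1 + n(m-1)$ does exit $\subs{X}$. Because $j \mapsto 1 + j(m-1)$ is strictly increasing, this is the shortest such cycle. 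The only delicate point in the argument is guaranteeing $\alpha_0 \neq 0$, which is precisely why I choose $B$ invertible with distinct eigenvalues.
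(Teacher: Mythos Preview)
Your construction is correct and proves the proposition, but it differs from the paper's in both the automaton and the matrices.

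The paper uses a {\v{C}}ern{\`y} automaton on $m$ nodes: a directed cycle $v_1 \to v_2 \to \cdots \to v_m \to v_1$ with all edges labeled $2$, together with a self-loop labeled $1$ at every node except $v_1$. The matrices are the $0$--$1$ matrices $A_1$ (the subdiagonal shift, $A_1 e_{i+1} = e_i$ and $A_1 e_1 = 0$) and $A_2$ (the single-entry matrix $A_2 e_1 = e_n$, $A_2 e_i = 0$ otherwise), with $\subs{X} = \spans\{e_1\}$. The shortest escaping cycle from $v_1$ carries the product $A_2(A_1^{n-1}A_2)^{m-1}$, and this is verified by direct computation using $A_1^{n-1}A_2 e_1 = e_1$ and $A_2 e_1 = e_n \notin \subs{X}$.

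Your automaton instead funnels every simple cycle at $v_1$ into the single family of products $A_2 A_1^{j(m-1)} = A_2 B^j$, and you then engineer $A_2$ on the cyclic basis $\{u, Bu, \ldots, B^{n-1}u\}$ so that $A_2 B^j u = 0$ for $j = 1, \ldots, n-1$ while Cayley--Hamilton forces $A_2 B^n u = \alpha_0 e_1 \notin \spans\{u\}$. This is a clean algebraic route: the graph makes the cycle structure at $v_1$ as rigid as possible, and the minimal polynomial of $B$ does the rest. The paper's route is more combinatorial, uses only $0$--$1$ matrices, and ties the tightness example to a classical automata-theoretic family; yours is arguably more transparent about \emph{why} exactly $n$ laps are needed.

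One small imprecision: your phrase ``any cycle at $v_1$'' should read ``any \emph{simple} cycle at $v_1$.'' Non-simple cycles of length below $1+n(m-1)$ decompose as concatenations of simple cycles each with parameter $j<n$; since each such $A_2 B^{j}$ annihilates $\subs{X}$, so does their composition, and the conclusion is unaffected.
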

\begin{proof}
We take any automaton $\graph$ on $m$ nodes from the \emph{{\v{C}}ern{\`y} automata} family 
(see Figure \ref{fig:cerny}). 
These automata are related to the well-known {\v{C}}ern{\`y}'s conjecture,
 which is still the subject of active research (a state-of-the-art survey is presented in \cite{VoSAAT}). 
 \begin{figure}[ht]
\centering
\includegraphics[scale= 0.4]{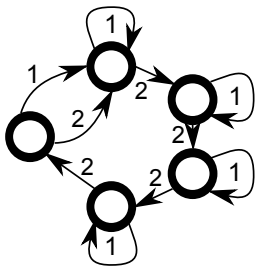}
\caption{A {\v{C}}ern{\`y} automaton on 5 nodes. In this example, the particular node $v$ is taken as the node on the left without self-loop. }
\label{fig:cerny}
\end{figure}
{\v{C}}ern{\`y}' automata possess $N = 2$ transition labels, and
we define following the set of matrices $ \mset = \{A_1, A_2\} \in \{0, 1\}^{n \times n}$:
\begin{align*}
& A_1(i,i+1) = 1, \,  i = 1 \cdots n - 1, \\
& A_2(n,1) = 1, 
\end{align*}
with the rest of the entries equals 0 for both matrices.  \\
 The  particular node $v \in V$ is taken to be the only node without self loop.
 The non-trivial subspace $\subs{X}$ is defined as the span of the basis vector $e_1 = [1,0,0, \ldots]^\top \in \reels^n$.
Notice that $e_1 \in \text{Ker}(A_1)$, $A_2 e_1 \in \text{Ker}(A_2)$, $A_2 e_1 \not \subset \subs{X}$, and finally $A_1^{n-1}A_2 e_1 = e_1$.\\
For this system, the shortest cycle $c \subset E$ on $v^*$ such that 
$y = A_c x \not \in \subs{X} $ has the associated product
$$A_c = A_2^{} \, (A_1^{n-1}A_2^{})^{m-1}_{}, $$
which is indeed a product of length $1 + n (m-1)$.

Regarding linear connectivity property, the bound $1 + n(m - 2)$ of Theorem \ref{thm:bound} is attained through the same family of automaton, with the same matrices, by taking the source node as the node without self loop, and the destination node as the one just before it (bottom of Figure \ref{fig:cerny}).
\end{proof}

\subsection{A sufficient condition based on an algebraic lifting}
\label{sec:koz}

In this last subsection, we show how a recently developed lifting technique 
can be put to good use in order to derive an alternative sufficient condition for the boundedness of a system.  We will also make use of the technique for the proofs presented in Section \ref{sec:0cjsr}.

\begin{definition}
Consider a system $S(\graph(V,E), \mset)$ with nodes $V = \{v_k\}_{k = 1, \ldots, \card{V}}$.
The set $\mset_\graph \subset \reels^{\left ( \card{V}n \right ) \times  \left ( \card{V}n \right )}$ is defined as follows :
$$\mset_\graph = \{A_{(v_i, v_j, \sigma)}, \, (v_i, v_j, \sigma) \in E\},$$
with 
$$A_{(v_i, v_j, \sigma)} = \left ( e_j e_i^{\tran}\right) \kron A_{\sigma},$$
where $e_i$ is the $i$th element of the canonical basis of $\reels^{\card{V}}$, and the operator "$\kron$" is the \emph{Kroenecker product} 
\footnote{
Given two matrices $A \in \reels^{m \times n}$ and $B \in \reels^{p \times q}$, 
the Kroenecker product of $A$ and $B$ is a matrix of dimension $(mp) \times (nq)$ defined as follows:
$$ A \kron B = \begin{bmatrix}
[A]_{1,1} B, & \cdots, & [A]_{1,n}B\\
\vdots, & \ddots, & \vdots\\
[A]_{m,1} B, & \cdots, & [A]_{m,n}B 
\end{bmatrix}, $$
where $[A]_{i,j}$ is the element on the ith row and jth column of $A$.
}
\label{def:lift}
between matrices.
\end{definition}
The lifting procedure allows to study the stability and boundedness properties
 of constrained switching systems through an equivalent, but higher dimensional, arbitrary switching system.
In \cite{KoTBWF}, Kozyakin introduced this lift to study constrained switching systems
 where the switching sequence is a realization of a Markov Chain. The results of \cite{KoTBWF} show the equality between the CJSR of these Markovian systems and the JSR of the arbitrary switching system on the lifted set of matrices.
 A similar lifting procedure was introduced in parallel by Wang et al. \cite{WaYuSOLA}
  for the general class of systems described by an automaton $\graph$.   
   Wang et al. proved that the stability of a constrained switching system is equivalent to that of the lifted set of matrices presented above. 
    From these results and those of Dai's \cite{DaAGTS}, it is easy to conclude the following: the CJSR of a constrained switching system $S(\graph, \mset)$ equals the JSR of the lifted matrix set $\mset_\graph$,
     and the boundedness of both the constrained and lifted arbitrary switching systems are equivalent.

This leads us to a very natural idea. Instead of devising complex boundedness conditions for constrained switching systems $S(\graph, \mset)$ with $\jsr(S) = 1$, we may just consider the irreducibility of the matrix set $\mset_\graph$, which would directly imply the boundedness of $S(\graph, \mset)$.\\
However, when analysing the system through its lift $\mset_\graph$, one loses all the combinatorial aspect of the dynamics encapsulated in the automaton. As illustrated in the next example, this leads to the fact that the irreducibility of a set $\mset_\graph$ is weaker than the condition of Theorem \ref{thm:non_def2}.

\begin{example}[Example  \ref{example:weakness}, cont.]
Consider the constrained switching system $S(\graph(V,E), \mset)$ of Example \ref{example:weakness}. 
For this system, there are 6 matrices in the set $\mset_\graph$, one per edge of the graph $\graph$ of Figure \ref{fig:ex_failing}. For constructing the set $\mset_\graph$, we let the nodes $v_1$, $v_2$ and $v_3$ be respectively the left, middle and right nodes in  Figure \ref{fig:ex_failing}. Then, we apply Definition \ref{def:lift} to build the matrices. For example, considering the edge $(v_1, v_2, 2) \in E$, the corresponding matrix is given by
$$A_{(v_1, v_2, 2)} = \left ( \begin{pmatrix} 0 \\ 1 \\0 \end{pmatrix} \cdot  \begin{pmatrix} 1 \\ 0 \\0 \end{pmatrix}^\top \right)  \kron A_2 = \begin{pmatrix} 0 & 0 & 0 \\ A_2 & 0 & 0 \\ 0 & 0& 0 \end{pmatrix}.
 $$
After constructing the set $\mset_\graph$, is is easy to verify that the subspace of $\reels^{3\cdot 2}$ defined as
$$ \subs{X} = \{ (x, y, z)^\top \in \reels^{3\cdot 2}, \, x = \in \reels^2, \, y \in \im{A_2}, \, z \in \im{A_2}  \},$$
is invariant under all matrices in $\mset_\graph$. Its is non-trivial with dimension 4 (recall that $A_2$ has rank 1 here).
We conclude that for the system $S(\graph, \mset)$ of Example \ref{example:weakness}, the lifted set $\mset_\graph$ is reducible. However, the system satisfies to the conditions of Theorem \ref{thm:non_def2}.
\end{example}

The following further qualifies the relation between Theorem \ref{thm:non_def2} and the irreducibility of $\mset_\graph$.
\begin{theorem}
Consider a system $S(\graph, \mset)$. If the lifted set of matrices 
$\mset_\graph$ is \emph{irreducible}, then for all $v \in V$, $v$ is irreducible.
\label{thm:liftisweak}
\end{theorem}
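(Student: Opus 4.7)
The plan is to prove the contrapositive: if some node $v \in V$ fails to be irreducible, then $\mset_\graph$ admits a non-trivial invariant subspace in $\reels^{\card{V}n}$, hence is reducible. Assume therefore that there exists a node $v \in V$ and a non-trivial subspace $\subs{X} \subset \reels^n$ with $A_c \subs{X} \subseteq \subs{X}$ for every cycle $c \in E_{v,v}^*$.

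The construction I have in mind is to propagate $\subs{X}$ through the graph: set $\subs{X}_v = \subs{X}$, and for every other node $w \in V$ define
\[
\subs{X}_w = \sum_{p \in E_{v,w}^*} A_p \subs{X}\,.
\]
Identifying $\reels^{\card{V}n}$ with $\reels^{\card{V}} \otimes \reels^n$ (as in Definition \ref{def:lift}), I then form the candidate invariant subspace
\[
\subs{Y} = \sum_{k=1}^{\card{V}} e_k \otimes \subs{X}_{v_k} \subset \reels^{\card{V}n}\,.
\]
The first main step will be to verify that $\subs{Y}$ is invariant under every generator $A_{(v_i,v_j,\sigma)} = (e_j e_i^\tran) \otimes A_\sigma$ of $\mset_\graph$. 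Since this generator maps $e_i \otimes x$ to $e_j \otimes (A_\sigma x)$ and annihilates the other components, invariance reduces to checking $A_\sigma \subs{X}_{v_i} \subseteq \subs{X}_{v_j}$ for every edge $(v_i,v_j,\sigma) \in E$. When $v_i \neq v$, this follows directly because concatenating an edge extends a path in $E_{v,v_i}^*$ into one in $E_{v,v_j}^*$. When $v_i = v$ but $v_j \neq v$, the single edge $(v,v_j,\sigma)$ is itself a path in $E_{v,v_j}^*$, so $A_\sigma \subs{X} \subseteq \subs{X}_{v_j}$. The delicate case is when $v_j = v$: then $A_\sigma A_p$ closes a cycle on $v$, and the reducibility assumption at $v$ is exactly what guarantees $A_\sigma \subs{X}_{v_i} \subseteq \subs{X} = \subs{X}_v$. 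This last case is where the node-reducibility hypothesis is truly used and is the step to be most careful about.

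The second step is to check that $\subs{Y}$ is non-trivial. Since $\subs{X}_v = \subs{X} \neq \{0\}$, the $v$-component of $\subs{Y}$ is non-zero, so $\subs{Y} \neq \{0\}$. Conversely, because $\subs{X}_v = \subs{X} \subsetneq \reels^n$, the $v$-block of $\subs{Y}$ is a proper subspace of $\reels^n$, so $\subs{Y} \subsetneq \reels^{\card{V}n}$. Hence $\subs{Y}$ is a non-trivial invariant subspace for $\mset_\graph$, contradicting its assumed irreducibility and completing the proof.

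I expect the main obstacle to be purely bookkeeping: writing the invariance verification cleanly across the four sub-cases (whether $v_i$ or $v_j$ equals $v$) and making sure the convention that $E_{v,w}^*$ starts at length one does not make $\subs{X}_v$ accidentally smaller than $\subs{X}$. Both issues are handled transparently by declaring $\subs{X}_v = \subs{X}$ outright and then observing that reducibility of $v$ forces $\sum_{c \in E_{v,v}^*} A_c \subs{X} \subseteq \subs{X}$, so this choice is consistent with the propagation definition.
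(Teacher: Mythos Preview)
Your argument is correct. Both the paper and you prove the contrapositive by building, from a reducible node, a non-trivial invariant subspace for $\mset_\graph$ as a block-direct sum of subspaces $\subs{X}_w \subseteq \reels^n$ indexed by the nodes. The difference is in how the $\subs{X}_w$ are manufactured: you \emph{push forward} along paths leaving the bad node, setting $\subs{X}_w = \sum_{p \in E_{v,w}^*} A_p \subs{X}$, whereas the paper \emph{pulls back} along paths entering the bad node, setting $\subs{X}_w = \{x : A_p x \in \subs{Y} \text{ for all } p \in E_{w,v'}^*\}$. These are dual constructions; in both cases the reducibility hypothesis is used exactly when an edge closes a cycle at the distinguished node, and non-triviality is ensured by the block at that node. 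Your write-up is actually more explicit than the paper's about the edge-by-edge invariance check and about why the declaration $\subs{X}_v = \subs{X}$ is consistent with the propagation rule; the paper's proof is terse and leaves both the invariance verification and the case analysis implicit.
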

\begin{proof}

Assume that there exists one node $v' \in V$ which is reducible, with a  non-trivial invariant subspace $\subs{Y}$.
At each node $v \in V$, $v \neq v'$, we define the following subspace,
$$ \subs{X}_{v} = \{x \in \reels^n: \, \forall p \in E_{v, v'}^*, A_p x \in \subs{Y}\}.$$
and we let $\subs{X}_{v'} = \subs{Y}$. The fact that $v'$ is reducible means that each one of these subspaces is invariant at its corresponding node. They  may however be non-trivial.\\
From these subspaces we then construct a non-trivial invariant subspace $\subs{X} \subset \reels^{n\card{V}}$ of the set $\mset_\graph$ by taking the \emph{cartesian product} of the $\{\subs{X}_{v}\}_{v \in V}$. \\
This shows that if at least one node in $V$ is reducible, then the set $\mset_\graph$ is reducible as well.
\end{proof}

\section{A polynomial time algorithm for deciding dead-beat stability and CJSR = 0}
\label{sec:0cjsr}
We now turn our focus towards our second goal, 
that is to provide a polynomial time algorithm for deciding, given a system $S(\graph, \mset)$, 
whether $\jsr(S) = 0$ or $\jsr(S) > 0$.
 We prove that, as it is the case for arbitrary switching systems, $\jsr(S) = 0$ is equivalent to dead-beat stability. 
  The interest lies in that it can provide conditions, checked in polynomial time, 
  of dead-beat stabilizability
  for switched autonomous systems, an issue that has not been extensively addressed in the literature and no clear results have been provided so far.
   The paper \cite{CoStSDRA} gives a good introduction for that problem but the systems under consideration are non autonomous. The work \cite{FiacMill13a} deals with dead-beat stability in the context of state reconstruction for LPV systems. \\
  Our theoretical results are presented in Subsection~\ref{ssec_genframe}. 
   Then, they are illustrated in Subsection~\ref{ssec_electric}. 
   The issue of left inverter design is discussed for a real-world application. 

\subsection{General results}
\label{ssec_genframe}

Our main theoretical result in this section
 is the generalization of Theorem \ref{thm:0JSR}.

\begin{theorem}
Given a system $S(\graph, \mset)$, $\jsr(S) = 0$ if and only if for all paths $p$ of length $n\card{V}$, $A_p = 0$.
\label{thm:0CJSR}
\end{theorem}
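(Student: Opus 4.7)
The plan is to reduce the claim to the arbitrary-switching case via the algebraic lifting of Subsection \ref{sec:koz} and then invoke Theorem \ref{thm:0JSR}.

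The direction ($\Leftarrow$) is purely combinatorial: if every path $p$ of length $n\card{V}$ in $\graph$ satisfies $A_p = 0$, then any longer path admits a partition $p = [p_1, p_2]$ with $\card{p_1} = n\card{V}$, giving $A_p = A_{p_2} A_{p_1} = 0$. Therefore the max in (\ref{eq:CJSR}) vanishes for all $t \geq n\card{V}$, and $\jsr(S) = 0$.

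For the nontrivial direction I would work with the lifted matrix set $\mset_\graph \subset \reels^{(n\card{V}) \times (n\card{V})}$ of Definition \ref{def:lift}. By the results of Kozyakin and Wang et al.\ recalled in Subsection \ref{sec:koz}, we have $\jsr(S) = \jsr(\mset_\graph)$. Since $\mset_\graph$ lives in dimension $n\card{V}$, Theorem \ref{thm:0JSR} applied to the associated arbitrary switching system states that $\jsr(\mset_\graph) = 0$ if and only if every product of $n\card{V}$ matrices drawn from $\mset_\graph$ equals zero.

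The last step is to translate these lifted products back into $\graph$. A short induction using the mixed-product rule $(AB) \kron (CD) = (A \kron C)(B \kron D)$ together with $e_i^{\tran} e_j = \delta_{ij}$ shows that a product $A_{f_t} \cdots A_{f_1}$ of matrices indexed by edges $f_s = (v_{i_s}, v_{j_s}, \sigma_s) \in E$ equals $\bigl(e_{j_t} e_{i_1}^{\tran}\bigr) \kron (A_{\sigma_t} \cdots A_{\sigma_1})$ whenever the $f_s$ concatenate into a valid path of $\graph$ (i.e.\ $j_s = i_{s+1}$ for all $s$), and is the zero matrix otherwise. Hence a product of length $n\card{V}$ in $\mset_\graph$ vanishes if and only if $A_p = 0$ for every path $p$ of length $n\card{V}$ in $\graph$. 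Combined with the previous paragraph, this closes the equivalence. The main subtlety, rather than a true obstacle, is this Kronecker bookkeeping: it is exactly what makes the lift faithful to the path structure of $\graph$ and lets the classical length-$n$ bound of Theorem \ref{thm:0JSR} inflate to the length-$n\card{V}$ bound one needs in the constrained setting.
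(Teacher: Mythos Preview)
Your proposal is correct and follows essentially the same route as the paper: reduce to the arbitrary-switching system on $\mset_\graph$ via the lift of Definition~\ref{def:lift} and apply Theorem~\ref{thm:0JSR} in dimension $n\card{V}$. You simply supply more detail than the paper does---spelling out the Kronecker bookkeeping that identifies nonzero length-$n\card{V}$ products in $\mset_\graph$ with paths of that length in $\graph$, and handling the easy direction directly from the definition~(\ref{eq:CJSR}) rather than through the lift.
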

\begin{proof}
The proof is direct from the relation between the CJSR of a system $S(\graph, \mset)$ and the JSR of the lift $\mset_\graph$ (Definition \ref{def:lift}). 
From Theorem \ref{thm:0JSR},
 we know that the JSR of the lifted system equals zero if and only if all products of length $n\card{V}$ of matrices in $\mset_\graph$ are equal to zero.
This is equivalent to asking that for all paths $p$ of length $n\card{V}$, $A_p = 0$. 
\end{proof}

The result above naturally indicates that $\jsr(S) = 0$ is decidable.
 This was of course known for the case of arbitrary switching systems, 
 and  L. Gurvits  \cite{GuSOLIP2} proposed (without proof) a polynomial time algorithm for checking whether the joint spectral radius of a set of matrices is zero or not. A proof is presented in \cite{JuTJSR}, Section 2.3.1.
\begin{proposition}[\emph{Gurvits' iteration \cite{GuSOLIP2}}]
Let $\mset \subset \reels^{n \times n}$ be a finite set of matrices, and let $U_0 = I$ be the identity matrix in $\reels^{n \times n}$.
For $k = 1, 2 , \ldots$, let 
$$U_k = \sum_{A \in \mset} A^\top U_{k-1} A. $$
The joint spectral radius of $\mset$ is zero if and only if $U_n = 0$.
\label{prop:Gurvits1}
\end{proposition}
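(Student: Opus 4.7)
The plan is to reduce the claim to Theorem~\ref{thm:0JSR} by showing that the iterate $U_k$ is, up to a convenient algebraic rewriting, a quantity that vanishes exactly when every length-$k$ product of matrices in $\mset$ vanishes. The whole proof rests on one identity plus one elementary positive-semidefinite observation.

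First, I would prove by induction on $k$ that
\begin{equation*}
U_k = \sum_{A_{\sigma(1)},\ldots,A_{\sigma(k)} \in \mset} \bigl( A_{\sigma(k)} \cdots A_{\sigma(1)} \bigr)^{\top} \bigl( A_{\sigma(k)} \cdots A_{\sigma(1)} \bigr).
\end{equation*}
The base case $k=0$ is immediate since $U_0 = I$, and the inductive step follows directly from the recursion $U_k = \sum_{A \in \mset} A^\top U_{k-1} A$ by distributing the sum and using the identity $(BA)^\top (BA) = A^\top (B^\top B) A$.

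Next, I would observe that every summand above is of the form $M^\top M$, hence positive semidefinite. A sum of positive semidefinite matrices equals the zero matrix if and only if each summand is the zero matrix, and $M^\top M = 0$ is equivalent to $M = 0$. Therefore $U_k = 0$ if and only if $A_{\sigma(k)} \cdots A_{\sigma(1)} = 0$ for every choice of $\sigma(1), \ldots, \sigma(k) \in \{1, \ldots, N\}$.

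Finally I would apply this equivalence with $k = n$, and invoke Theorem~\ref{thm:0JSR}, which states that the joint spectral radius of $\mset$ is zero precisely when every product of $n$ matrices from $\mset$ vanishes. Combining the two characterisations yields $\hat\rho(\mset) = 0 \iff U_n = 0$. No step here is a serious obstacle: the only subtlety is to notice the positive-semidefinite structure of $U_k$, which is exactly what converts the sum-of-products into a statement about each individual product.
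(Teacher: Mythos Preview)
Your proof is correct and is precisely the standard argument. The paper itself does not give a proof of Proposition~\ref{prop:Gurvits1}, deferring instead to \cite{JuTJSR}, but the very same reasoning---expanding $U_k$ as $\sum_p A_p^\top A_p$, using positive semidefiniteness to conclude that the sum vanishes iff every term does, and then invoking the length-$n$ characterisation of zero joint spectral radius---is exactly what the paper uses to prove the generalized version (Proposition~\ref{prop:Gurvits2}).
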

The result above can be extended to constrained switching systems with the (expected) cost of 
taking the automaton $\graph$ into account. A first approach would be to apply Proposition \ref{prop:Gurvits1} 
to the lifted arbitrary switching system on the set $\mset_\graph$ obtained from Definition \ref{def:lift}. 
However, this requires the computation of $\mset_\graph$, and this first approach does not benefit of the combinatorial 
structure of the system $S(\graph, \mset)$.
Instead of doing this, we propose a second, more efficient, approach.
\begin{proposition}[\emph{Generalized Gurvits' iteration}]
Consider a system $S(\graph(V,E), \mset)$. Define, for all nodes $v \in V$, $U_0^v = I$, the identity matrix in $\reels^{n \times n}$.
For $k = 1,2, \ldots$, for all nodes $v$, let 
$$ U_k^v = \sum_{(v, w, \sigma) \in E_{v}^1} A_{\sigma}^\top U_{k-1}^{w} A_\sigma.$$
The system has $\jsr(\graph, \mset) = 0$  if and only if $U_{n\card{V}}^v = 0$ for all $v \in V$.
\label{prop:Gurvits2}
\end{proposition}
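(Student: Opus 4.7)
The plan is to reduce this to Theorem \ref{thm:0CJSR} by showing that the iterates $U_k^v$ encode precisely the information of all matrix products associated to paths of length $k$ starting at the node $v$. More precisely, I would prove by induction on $k$ the identity
\begin{equation*}
U_k^v \;=\; \sum_{p \in E_v^k} A_p^\top A_p.
\end{equation*}
The base case $k=0$ is immediate since $U_0^v = I$, matching the convention that the empty path (of length $0$) starts at $v$ and carries the identity matrix. For the inductive step, I would split any path $p \in E_v^k$ uniquely as $p = [e, q]$ with $e = (v, w, \sigma) \in E_v^1$ its first edge and $q \in E_w^{k-1}$ the remainder; then the paper's convention gives $A_p = A_q A_\sigma$, hence $A_p^\top A_p = A_\sigma^\top (A_q^\top A_q) A_\sigma$. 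Summing over $e$ and $q$ and applying the induction hypothesis to each $U_{k-1}^w$ yields exactly the recursion defining $U_k^v$.

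Having established this identity, the conclusion is straightforward. Each term $A_p^\top A_p$ is positive semi-definite, so a finite sum of such terms vanishes if and only if every summand vanishes, which in turn is equivalent to $A_p = 0$ for every $p \in E_v^k$. Taking $k = n\card{V}$ and requiring $U_{n\card{V}}^v = 0$ for all $v \in V$ is therefore equivalent to $A_p = 0$ for every path $p$ of length $n\card{V}$. By Theorem \ref{thm:0CJSR}, this is exactly the condition $\jsr(S) = 0$.

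The only subtlety — and the step I expect to require the most care — is correctly lining up the path-composition convention with the matrix-product convention: paths in the paper are read left to right in the order of traversal, while the product $A_p = A_{\sigma(T)} \cdots A_{\sigma(1)}$ composes from right to left. In particular, the edge that appears \emph{first} in the path contributes $A_\sigma$ as the \emph{innermost} factor of $A_p$, which is precisely what places the $A_\sigma^\top (\cdot) A_\sigma$ conjugation on the outside in the recursion for $U_k^v$. Once this bookkeeping is fixed, the induction collapses to a single line, and the rest of the argument is immediate from Theorem \ref{thm:0CJSR} and the positive-semidefiniteness observation. As a side remark, one could also derive the result by applying Proposition \ref{prop:Gurvits1} directly to the lifted set $\mset_\graph$ from Definition \ref{def:lift}; this alternative route is conceptually cleaner but, as the authors note, bypasses the combinatorial structure of $\graph$ and would not give the sharper, node-indexed formulation stated here.
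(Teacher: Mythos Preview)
Your proposal is correct and follows essentially the same approach as the paper: the paper's proof simply asserts the identity $U_k^v = \sum_{p \in E_v^k} A_p^\top A_p$ ``by construction,'' notes that a sum of positive semi-definite matrices vanishes if and only if each summand does, and then invokes Theorem~\ref{thm:0CJSR}. Your version is in fact more careful than the paper's, since you spell out the induction and the path/product ordering convention that the paper leaves implicit.
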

 \begin{proof}
 By construction, we obtain
 $$ U^v_k = \sum_{p \in E_{v}^k} A^\top_p A_p.$$
 These matrices are sum of positive semi-definite matrices. They equal zero if and only if all the matrices in the sum are zero.
Therefore, we have $U^v_{n \card{V}} = 0$ if an only if for all paths $p \in E_v^{n \card{V}}$, $A_p = 0$. The conclusion follows immediately.
 \end{proof}

Ending this subsection, we relate the case $\jsr(S) = 0$ with our main result in Section \ref{sec:bound}, Theorem~\ref{thm:non_def2}.
 
\begin{proposition}
Consider a  system $S(\graph(V,E), \mset)$ linearly connected and with an unavoidable set of irreducible nodes.
For this system, $\jsr(S) > 0$ and there exists a constant $K > 0$ such that for any $t \geq 0$ and any switching sequence $\sigma(0), \ldots, \sigma(t-1)$ accepted by $\graph$,
\begin{equation}
 \|A_\sigma(t-1) \cdots A_\sigma(0)\| \leq K \jsr(S)^t. 
\label{eq:non-def}
\end{equation}
\label{prop:non-def}
\end{proposition}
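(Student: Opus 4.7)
The plan is to verify the two claims separately: first $\jsr(S) > 0$, and then the exponential bound (\ref{eq:non-def}) via a rescaling argument that invokes Theorem \ref{thm:non_def2}. The first claim carries all the content.

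For $\jsr(S) > 0$, the contrapositive of Theorem \ref{thm:0CJSR} says it suffices to exhibit a single path of length $n\card{V}$ with non-zero associated product. Fix an irreducible node $v^* \in V^*$ and consider
\[
\subs{Z} \;=\; \{ x \in \reels^n : A_c x = 0 \text{ for every } c \in E_{v^*, v^*}^* \}.
\]
The subspace $\subs{Z}$ is invariant under $A_{c'}$ for every cycle $c' \in E_{v^*, v^*}^*$: if $x \in \subs{Z}$ and $c$ is any cycle at $v^*$, then $[c', c]$ is again a cycle at $v^*$, so $A_c A_{c'} x = A_{[c', c]} x = 0$, whence $A_{c'} x \in \subs{Z}$. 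Irreducibility of $v^*$ then forces $\subs{Z} \in \{ \{0\}, \reels^n \}$. The case $\subs{Z} = \reels^n$ is excluded because irreducibility itself (applied, for $n \geq 2$, to any one-dimensional subspace) or linear connectivity of the pair $(v^*, v^*)$ (for $n = 1$) supplies at least one cycle at $v^*$ with $A_c \neq 0$. Hence $\subs{Z} = \{0\}$, so every non-zero vector is moved off zero by some cycle at $v^*$.

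Starting from any non-zero $x_0$, I now iteratively pick cycles $c_1, c_2, \ldots$ at $v^*$ so that $x_k := A_{c_k} \cdots A_{c_1} x_0 \neq 0$ at every step, invoking $\subs{Z} = \{0\}$ at each iteration. Since $|c_i| \geq 1$, after $k = n\card{V}$ iterations the concatenation $[c_1, \ldots, c_k]$ is a cyclic path of length at least $n\card{V}$ with non-zero associated product; any length-$n\card{V}$ prefix has non-zero product as well (a zero prefix would annihilate the full product). This is the path we needed, and $\jsr(S) > 0$ follows from Theorem \ref{thm:0CJSR}.

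For the exponential bound, rescale by setting $\mset' = \{ A / \jsr(S) : A \in \mset \}$ (legitimate since $\jsr(S) > 0$) and $S' = S(\graph, \mset')$. Homogeneity of the norm gives $\jsr(S') = 1$. Scalar rescaling of matrices preserves both the kernels and the invariant subspaces of each product $A_p$, so $S'$ remains linearly connected and every $v \in V^*$ remains irreducible in $S'$; unavoidability of $V^*$ is a property of $\graph$ alone and is trivially inherited. Theorem \ref{thm:non_def2} applied to $S'$ then yields a constant $K > 0$ with $\|A'_{\sigma(t-1)} \cdots A'_{\sigma(0)}\| \leq K$ uniformly over $t$ and accepted switching sequences; multiplying through by $\jsr(S)^t$ recovers (\ref{eq:non-def}). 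The main obstacle is the quantitative lower-bound argument above: the invariance of $\subs{Z}$ under the whole cycle semigroup at $v^*$ is what cleanly turns the qualitative notion of nodal irreducibility into the existence of an explicit non-vanishing product of prescribed length $n\card{V}$.
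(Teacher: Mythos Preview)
Your proof is correct, and the rescaling argument for the exponential bound is exactly the paper's. The argument for $\jsr(S) > 0$, however, follows a genuinely different route.

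The paper proceeds by invoking Theorem~\ref{thm:bound}: at an irreducible node $v$, the \emph{finite} set $\mset_v = \{A_c : c \in E_{v,v}^*,\ |c| \le 1 + n(\card{V}-1)\}$ is irreducible in the classical sense, and then an external result (\cite{JuTJSR}, Lemma~2.2) gives that the joint spectral radius $\rho$ of $\mset_v$ is strictly positive, whence $\jsr(S) \ge \rho^{1/(1+n(\card{V}-1))} > 0$. Your argument instead stays internal to the present paper: you show directly that the common kernel $\subs{Z}$ of the full cycle semigroup at $v^*$ is invariant and hence trivial, build by iteration an arbitrarily long non-vanishing product of cycles, and then appeal to the contrapositive of Theorem~\ref{thm:0CJSR}. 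This avoids both the length bound of Theorem~\ref{thm:bound} and the external citation, at the price of losing the explicit quantitative lower bound on $\jsr(S)$ that the paper's route yields. Your handling of the degenerate case $n=1$ via linear connectivity of the pair $(v^*,v^*)$ is a nice touch that the paper's argument (relying on irreducibility of $\mset_v$) would also need but leaves implicit.
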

\begin{proof}
The proof that $\jsr(S) > 0$ is extracted from Theorem \ref{thm:bound} and from  \cite{JuTJSR} - Lemma 2.2. 
This second Lemma states that \emph{the joint spectral radius} of an \emph{irreducible set of matrices} is greater than 0.
\\
Now, take any irreducible node $v$ of the system.
From Theorem \ref{thm:bound}, the set of matrices 
$$\mset_v = \{A_c : c \in E_{v,v}^*,\,  \card{c} \leq 1 + n(\card{V}-1)\}, $$
is irreducible. 
From Lemma 2.2 in \cite{JuTJSR}  the joint spectral radius $\rho$ of this set is greater than zero. 
From this point, it is easy to conclude that  $\jsr(S) \geq \rho^{1/ \left ( 1 + n(\card{V}-1) \right ) }> 0$.\\
For the second part of the proposition, remark that the CJSR (\ref{eq:CJSR}) is homogeneous in the set of matrices $\mset$.
That is, if one scales all the matrices in $\mset$ by a same constant $\tau \geq 0$, the CJSR of the scaled system is then equal to $\tau \jsr(S)$.
Therefore, by scaling the set $\mset$ of the system $S$ by the positive constant $1/\jsr(S)$, the scaled system has a CJSR of 1. 
This scaled system then satisfies to the hypothesis of Theorem \ref{thm:non_def2},
 and by the definition of boundedness (\ref{eq:boundedness}) and homogeneity of the norm, we obtain
$$ \jsr(S)^{-t}\|A_{\sigma(t-1)} \cdots 
A_{\sigma(0)} \| \leq K, $$
for some constant $K$, for all $t \geq 0$ and all  switching sequences $\sigma(0), \ldots,\sigma(t-1)$ of the original system $S$. 
\end{proof}
We now move onto an example to show the applicability and efficiency of the framework. The example proposed below corresponds to a real-world case study.

\subsection{Example: designing a left inverter for an electrical vehicle}
\label{ssec_electric}

The vehicle under consideration has been deeply studied in \cite{MaCODU}. Actually, it is the prototype developed by the Research Center for Automatic Control of Nancy which is annually involved in the European Shell Eco-Marathon
race  in the Plug-in (battery) category.
The nonlinear discrete-time model with state variables $x^{(1)}_t$ and $x^{(2)}_t$ denoting respectively the position and the velocity at time $t$ admits the state space form
\begin{equation}
\begin{array}{lcl}
x_{t+1} & = & \begin{pmatrix}
			1&a(T_t)\\
			0&b(T_t, x^{(2)}_t)\\
		\end{pmatrix}x_t+\begin{pmatrix}		
			0\\ c(T_t)\\
		\end{pmatrix}u_t
\end{array}
\label{eq_syst}
\end{equation}
with time-varying entries 
$ a(T_t)$, $b(T_t, x^{(2)}_t)$ and $c(T_t)$ as a function of $T_t$ and/or $x^{(2)}_t$ and of physical parameters (mass, dimensions, aerodynamics, \ldots) of the vehicle.
	The quantity $T_t$ is the \emph{sampling period} that may be either constant or time-varying. We denote with $\rho_t$ the vector $\rho_t=\big(a(T_t),b(T_t, x^{(2)}_t),c(T_t)\big)$.\\
Let us consider the position $x_t^{(1)}$ as the measured output $y_t$, that is $y_t=\begin{pmatrix}
			1&0
		\end{pmatrix}x_t.	$
 Hence, the system \eqref{eq_syst} can be expressed as
\begin{equation}
\begin{aligned}
\label{eq_LPV}
x_{t+1}&=A(\rho_t)x_t+B(\rho_t)u_t, \\
y_t & = C x_t.
\end{aligned}
\end{equation}
We aim at designing a \emph{left inverter} for \eqref{eq_syst}.
A left inverter is a dynamical system  able to give, 
for every time $t$, the input $u_t$ from
 a sequence of measures $y_t,y_{t-1},\ldots$. 
 This left inverter has a practical interest insofar 
 as it will allow to estimate the control $u_t$ from the
  measures $y_t,y_{t-1},\ldots$ delivered by the sensors. 
  When placed on-board, it will be involved in an
   actuator default detection module 
   that plays a central role for the sake of safety. \\
Left inversion for discrete-time linear switched 
(and by extension Parameter Varying) 
systems has been addressed in several papers
 for different purposes \cite{SuHaDSIA,MiDaIAFO,MiDaFOSL,PaMiNSFT}. From those papers, it is recalled that 
the stability of the left inverter,
 that is the convergence in finite time of the input estimate $\hat{u}_t$ delivered by the left inverter to the actual input $u_t$ is 
 guaranteed whenever the auxiliary system defined below is dead-beat stable. 
\begin{equation}
{\epsilon}_{t+1} = Q(\rho_t,\ldots,\rho_{t+r}) {\epsilon}_t,
\label{eq:INVSYS}
\end{equation}
where the integer $r$ corresponds to the relative degree of the system. The relative degree is the minimum number of iterations $r$ such that the output $y_{t+r}$ depends explicitely on the input $u_t$ \cite{PaMiNSFT}. \\ 
The matrix $Q(\rho_t,\ldots,\rho_{t+r})$ describes the dynamics of the reconstruction error  $\epsilon_t=u_t-\hat{u}_{t}$.
For the system \eqref{eq_LPV}, the relative degree is $r=2$ since the product $CA(\rho_{t+1})B(\rho_t)\neq 0$, and $Q(\rho_t, \rho_{t+1}, \rho_{t+2})$ is given by
$$
Q(\rho_t,\rho_{t+1},\rho_{t+2})=A(\rho_t)-B(\rho_t)\big(CA(\rho_{t+1})B(\rho_t)\big)^{-1}CA(\rho_{t+1})A(\rho_{t}).
$$
It turns out that in our case, $Q({\rho_t,\rho_{t+1},\rho_{t+2}})$ depends only on the sampling periods at times $t$ and $t+1$, that is
		$$
		Q({\rho_t,\rho_{t+1},\rho_{t+2}})=
			\begin{pmatrix}
				1&a(T_t)\\
				-a(T_{t+1})^{-1}&-a(T_{t+1})^{-1}a(T_t)\\
			\end{pmatrix}=Q({T_t,T_{t+1}}).
		$$
In this case study, the sampling period $T_t$ takes  $2$ possible values $T^1$ or $T^2$ according to the range of variation of the velocity of the vehicle. The faster the vehicle, the lower the sampling period.  Then, we can rewrite the auxiliary system \eqref{eq:INVSYS} as
\begin{equation}
\label{eq_aux2}
{\epsilon}_{t+1} = Q_{\sigma(t)} {\epsilon}_t  ,
\end{equation}
which takes the form of a switched linear system that assigns to each sequence
 $(T_t,T_{t+1})$ starting at time $t$ a mode $\sigma(t) \in \{1,2,3,4\}$. 
 This explicitly induces constraints on the the switching sequences. 
 Indeed, let us consider that at a given time $t$, the mode $\sigma(t)$ corresponds to $(T_t,T_{t+1})=(T_t,T^2)$. 
 At time $t+1$, we need a mode $\sigma(t+1)$ corresponding to a sequence of the form $(T_{t+1}, T_{t+2})=(T^2,T_{t+2})$. 
 Therefore, the auxiliary system can be expressed as a system $S(\graph, \mset)$ on $N = 4$ modes. 
 A valid automaton for the system is given in Figure \ref{fig:graphFlat}.
 \begin{figure}[!ht]
\centering
\includegraphics[scale=0.5]{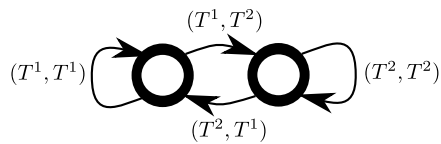}
\caption{Every mode $\sigma(t)$ of the inverse system corresponds to a sequence $(T_t, T_{t+1}) \in \{T^1, T^2\}^2$ 
of the original system. There are 4 of such pairs, and 4 possible transitions between these pairs.}
\label{fig:graphFlat}
\end{figure}

For our system,  Theorem \ref{thm:0CJSR} is verified and
 the algorithm of Proposition \ref{prop:Gurvits2} 
 confirms that $\jsr(S) = 0$.
This  can be verified by hand, noticing that the product $Q({T_{t+1},T_{t+2}})Q({T_t,T_{t+1}})=0$, for all $t \geq 0$ when following the switching rules. 
  The auxiliary system is here \emph{dead-beat} stable. 
  It is worth noticing that, since the product $ Q({T_t,T_{t+1}})Q({T_{t+1},T_{t+2}})\neq 0$, the \emph{arbitrary switching system} on the set of matrices defining the auxiliary system is not deadbeat stable.
   This highlights again that the result is not trivial, and that a tool adapted for constrained switching systems was required.

\section{Conclusion}
Discrete-time linear switching systems with constrained switching sequences are a rich class of dynamical systems. 
They present many challenging problems regarding their stability and boundedness properties.\\ 
In this work, we focused on two aspects. 
First, we provided decidable sufficient boundedness conditions for the case when the constrained joint spectral radius (\ref{eq:CJSR}) of a  system  is equal to one. 
Second, we provided a polynomial time algorithm for deciding when all trajectories of a system vanish to the origin in finite time.\\
For future research direction, we first point out that while we proved the decidability of our boundedness conditions, 
we currently have no \emph{polynomial time} algorithm for their verification (see Remark \ref{rem:polytime}). 
To the best of our knowledge, this problem is open even in the special case of arbitrary switching systems, 
for which the conditions boil down to the irreducibility of a set of matrices (Definition \ref{def:red}). 
Second, it has been shown in \cite{PaMiNSFT} that deciding dead-beat stability can be used for flatness analysis of switching linear systems.
 Hence, we expect that the algorithm presented here will lead to interesting alternatives for characterizing the flatness of switching systems.
The study of the stability properties of constrained switching systems arising as left inverters (see Subsection \ref{ssec_electric}), or by extension, Unknown Input Observers, also deserves further inspection.

\section{Bibliography}
\label{sec:biblio}
\bibliographystyle{abbrv}
\bibliography{biblio} 

\end{document}